\newtheorem{theorem}{Theorem}
\theoremstyle{plain}
\newtheorem{conjecture}{Conjecture}
\newtheorem{corollary}{Corollary}
\newtheorem{definition}{Definition}
\newtheorem{lemma}{Lemma}
\numberwithin{equation}{section}
\DeclareMathOperator{\col}{col}
\DeclareMathOperator{\modulo}{mod}
\begin{document}
	\title[Hat chromatic number of graphs]{Hat chromatic number of graphs}
	
	\author{Bart\l omiej Bosek}
	\address{Theoretical Computer Science Department, Faculty of Mathematics and Computer Science, Jagiellonian
		University, ul. Prof. Stanis\l awa \L ojasiewicza 6, 30-348 Krak\'{o}w, Poland}
	\email{bosek@tcs.uj.edu.pl}
	
	\author{Andrzej Dudek}
	\address{Department of Mathematics, Western Michigan University, Kalamazoo, MI}
	\email{ andrzej.dudek@wmich.edu}
	
	\author{Micha{\l} Farnik}
\address{Faculty of Mathematics and Computer Science,
	Jagiellonian University, ul. Prof. Stanis\l awa \L ojasiewicza 6, 30-348 Krak\'{o}w, Poland}
\email{michal.farnik@gmail.com}

	\author{Jaros\l aw Grytczuk}
	\address{Faculty of Mathematics and Information Science, Warsaw University
		of Technology, 00-662 Warsaw, Poland}
	\email{j.grytczuk@mini.pw.edu.pl}
	
	\author{Przemys\l aw Mazur}
\address{Wayve Technologies, 30 Station Road, Cambridge CB1 2RE, United Kingdom}
\email{przemek@wayve.ai}

\thanks{The first and fourth author are partially supported by the grant of Narodowe Centrum Nauki, grant number 2017/26/D/ST6/00264.}

	\begin{abstract}
We study the hat chromatic number of a graph defined in the following way: there is one player at each vertex of a
loopless graph $G$, an adversary places a hat of one
of $K$ colors on the head of each player, two players can see each other's hats if and only if they are
at adjacent vertices. All players simultaneously try to guess the color of their hat. The players cannot communicate but collectively determine a strategy before the hats are placed. The hat chromatic number, $\mu(G)$, is the largest number $K$ of colors such that the players are able to fix a strategy that will ensure that for every possible placement of hats at least one of the guesses correctly.

We compute $\mu(G)$ for several classes of graphs, for others we establish some bounds. We establish connections between the hat chromatic number, the chromatic number and the coloring number. We also introduce several variants of the game: with multiple guesses, restrictions on allowed strategies or restrictions on colorings. We show examples how the modified games can be used to obtain interesting results for the original game.
	\end{abstract}
	
	\maketitle
	
	\section{Introduction}
	
	We study a graph coloring problem inspired by the following hat guessing puzzle. There are $n$ players (we call them \emph{Bears}) sitting around and looking at each other. There is also an adversary (we call her \emph{Demon}) who suddenly puts colored hats on their heads. Each Bear can see all hats except his own. After a while, each Bear writes down on a piece of paper hypothetical color of his own hat. No communication with other Bears is allowed, though before the play Bears may fix some strategy. They win collectively against Demon if at least one of them guesses correctly. Otherwise, when none of them guessed correctly, Demon is the winner. Notice that Demon, as a supernatural creature, can read Bears' minds. So, she knows their strategy before the play. What is the maximum number of hats' colors (depending on $n$) for which there is a strategy guaranteeing Bears' win?
	
	This puzzle has a natural generalization for arbitrary graphs. Bears are sitting at the vertices of a given graph $G$, and each Bear can see only colleagues occupying neighboring vertices. So, his guess depends only on color configuration appearing in his neighborhood. The original puzzle concerns the case of complete graph $K_n$. We ask more generally: what is the maximum number of hats' colors for which there is a strategy guaranteeing Bears' win on a graph $G$?
	
	We denote this number by $\mu(G)$, and call it the \emph{hat chromatic number} of a graph $G$. Rigorous definition will appear in the next section, where, as a warm up, we also solve the original puzzle by proving that $\mu(K_n)=n$. Another family of graphs for which the problem is  completely solved are cycles. A surprising theorem due to Szczechla \cite{Szcz} asserts that $\mu(C_n)=3$ if $n=3k$ or $n=4$, and $\mu(C_n)=2$ in all other cases. The proof is quite involved. It is also known that $\mu(T) \leqslant 3$ for any tree $T$ (first shown in \cite{BHKL}). In fact in \cite{KL} it is shown that for a connected graph $\mu(G)\leq 2$ if and only if $G$ is a tree or $G$ contains a unique cycle $C_n$ where $n$ is not divisible by $3$ and $n\geq 5$. A Lov\'{a}sz Local Lemma argument shows that $\mu(G)\leqslant e(\Delta+1)$ for graphs with maximum degree $\Delta$.

In \cite{BHKL} it was shown that for a complete bipartite graph we have $\mu(K_{k-1,k^{k^n}})\geq k$. In \cite{ABST} it was shown that for a sufficiently large $r$-partite graph $\mu(K_{n,\ldots,n})\geq n^{\frac{r-1}{r}-o(1)}$.
	
	In this paper we prove more results and formulate several conjectures on the number $\mu(G)$, focusing on graphs with bounded density. In particular, we prove that $\mu(G)$ is bounded for graphs of bounded genus and sufficienlty large girth (depending on genus).
	
	\section{Warm up}
	
	Let us start with a simple solution to the original puzzle with $n$ Bears on a clique $K_n$.
	\begin{theorem}\label{th_clique}
		Every clique satisfies $\mu(K_n)=n$.
	\end{theorem}
	\begin{proof}
	Denote Bears by $B_1,B_2,\dots, B_n$, and assume that the set of colors is $\mathbb Z_n$. First we show that Bears have a winning strategy. Let $x_i$ denote the color of hat obtained by $B_i$. Suppose that the total sum of colors chosen by Demon is $S=x_1+x_2+\dots +x_n$. Now each Bear $B_i$ imagines that $S\equiv i(\modulo n)$, and guesses accordingly by writing a missing term as a hypothetical color of his own hat. More precisely, the strategy of Bear $B_i$ is given by an expression $$F_i=i-(x_1+\dots +x_{i-1}+x_{i+1}+\dots+x_n).$$ Since there are $n$ Bears and $n$ possible values of $S$ in $\mathbb Z_n$, we must have $x_i-F_i=0$ for at least one $i$, which means that at least one Bear guesses correctly.
	
	To see that Bears cannot win if the number of colors exceeds $n$, we apply a simple probabilistic argument. Let $k$ be a fixed number of colors. Assume that Bears have fixed their deterministic strategies. This means that a guess of each Bear is uniquely determined by hat colors of his neighbors. Suppose that Demon distributes colored hats randomly, choosing a color for each Bear independently with uniform probability. Let $A_i$ denote the event that the $i$-th Bear guesses correctly. Clearly, $\Pr(A_i)=1/k$. Thus, the probability that at least one Bear guesses correctly satisfies
	$$\Pr\left(\bigcup_{i=1}^{n}A_i\right)\leqslant \sum_{i=1}^{n}\Pr (A_i)=\frac{n}{k}.$$
	This implies that for $k>n$, with positive probability none of the Bears guesses correctly, and therefore Demon is the winner.
	\end{proof}

		\section{Notation and definitions}
		
	Now, we give a formal definition of the hat chromatic number $\mu (G)$. Let $G$ be a graph on the set of vertices $V=\{1,2,\dots,n\}$. Let $\Gamma$ be a fixed set of colors. Suppose that each vertex $i$ is assigned an $n$-ary function $F_i(x_1,x_2,\dots, x_n)$ mapping $\Gamma^n$ to $\Gamma$. We assume however, that $F_i$ depends only on those variables $x_j$ for which $j$ is adjacent to $i$ in $G$. In other words, a value of $F_i$ stays constant under any changes on coordinates corresponding to vertices not adjacent to $i$. Such functions $F_i$ will be called \emph{strategies} on a graph $G$.
	
	Consider now a system of equations
	\begin{equation}\label{system}
	F_i(x_1,x_2,\dots,x_n)=x_i
	\end{equation}
	 for $i=1,2,\dots,n$, where $F_i$'s are some strategies on a graph $G$. Suppose that for every substitution of elements from $\Gamma$ for variables $x_i$, at least one of equations in the system (\ref{system}) is satisfied. Then, the number $\mu(G)$ is defined as the largest integer $k=|\Gamma|$, for which such a system of equations exists for a graph $G$. We call it the \emph{hat chromatic number} of a graph $G$.
	
	In other words, if $k\geqslant \mu (G)+1$, then for every system of equations (\ref{system}) there is a substitution of elements form $\Gamma$ for variables $x_i$ such that none of the equations in the system (\ref{system}) holds. Such substitution will be called a \emph{demonic coloring} of a graph $G$ with respect to fixed strategies $F_i$. Therefore the hat chromatic number $\mu (G)$ can be defined equivalently as the least positive integer $k$ such that for every strategies $F_i$ on a graph $G$, there exists a demonic coloring of $G$ using $k+1$ colors.
	
	We can make this definition even more algebraic in the following way. Assume that $\Gamma$ is a set of $k$-th roots of unity in the field of complex number $\mathbb{C}$. For every strategy $F_i$ we may find a multivariable complex polynomial $P_i$ representing $F_i$ over $\Gamma$. This means that for every $a\in \Gamma^n$ we have $P_i(a)=F_i(a)$. The system of equations (\ref{system}) is then equivalent to a single polynomial equation
		\begin{equation}\label{single polynomial}
	P(x_1,x_2,\dots,x_n)=\prod_{i=1}^n(x_i-P_i(x_1,x_2,\dots,x_n))=0,
	\end{equation}
	over $\Gamma^n$. Since $x^k=1$ for every $x\in\Gamma$, we may reduce polynomial $P$ to a new polynomial $\tilde{P}$, in which every variable appears with exponent at most $k-1$. Then the hat chromatic number $\mu(G)$ is the least integer $k-1$ such that for every polynomial $P$, as defined above, its reduced version $\tilde{P}$ is non zero.

	\section{Probabilistic bounds}

We start with a simple proof that the hat chromatic number $\mu(G)$ is bounded for graphs of bounded maximum degree. It will be sufficient to use the symmetric version of the Lov\'{a}sz Local Lemma.

\begin{lemma}
Let $A_1,A_2,\dots,A_n$ be events in some probability space. Assume that $\Pr(A_i)\leqslant p$, and no event $A_i$ depends on more than $d$ other events. If $ep(d+1)\leqslant 1$, then $$\Pr\left(\bigcap_{i=1}^{n}\overline{A_i}\right)>0.$$
\end{lemma}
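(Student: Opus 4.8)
The plan is to derive this symmetric statement from the standard induction on conditional probabilities that underlies the Lov\'{a}sz Local Lemma. First I would record the dependency structure implied by the hypothesis: for each $i$ fix a set $D(i)$ of at most $d$ indices such that $A_i$ is mutually independent of the family $\{A_j : j\notin D(i)\cup\{i\}\}$. The heart of the argument is the following claim, to be proved by induction on $|S|$: for every index $i$ and every set $S\subseteq\{1,2,\dots,n\}\setminus\{i\}$,
$$\Pr\!\left(A_i \,\Big|\, \bigcap_{j\in S}\overline{A_j}\right)\leqslant \frac{1}{d+1}.$$
The base case $S=\varnothing$ is immediate from $\Pr(A_i)\leqslant p\leqslant \frac{1}{d+1}$.

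For the inductive step I would split $S$ into $S_1=S\cap D(i)$ and $S_2=S\setminus D(i)$, and rewrite the conditional probability as a quotient in which everything is further conditioned on $\bigcap_{k\in S_2}\overline{A_k}$. The numerator is bounded above using that $A_i$ is independent of the events indexed by $S_2$, so it collapses to $\Pr(A_i)\leqslant p$. The denominator is bounded below by applying the induction hypothesis to the at most $d$ events indexed by $S_1$, peeling them off one at a time; each factor is at least $1-\frac{1}{d+1}$, giving a product at least $\left(1-\frac{1}{d+1}\right)^{|S_1|}\geqslant\left(1-\frac{1}{d+1}\right)^{d}$. Combining the two bounds, the claim reduces to verifying
$$ p\leqslant \frac{1}{d+1}\left(1-\frac{1}{d+1}\right)^{d}, $$
which follows from the hypothesis $ep(d+1)\leqslant 1$ together with the elementary inequality $\left(1-\frac{1}{d+1}\right)^{d}>\frac{1}{e}$.

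Once the claim is in hand, the conclusion is obtained by the chain rule: writing $\Pr\!\left(\bigcap_{i=1}^{n}\overline{A_i}\right)$ as a telescoping product $\prod_{i=1}^{n}\Pr\!\left(\overline{A_i}\mid\bigcap_{j<i}\overline{A_j}\right)$ and bounding each factor below by $1-\frac{1}{d+1}>0$ yields a strictly positive lower bound. I expect the main obstacle to be the bookkeeping in the inductive step: verifying that the independence hypothesis is applied correctly so that the numerator genuinely reduces to the unconditional $\Pr(A_i)$, and checking that when the induction hypothesis is invoked on the events of $S_1$ the conditioning sets are strictly smaller, so that the induction is well founded.
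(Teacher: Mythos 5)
Your proposal is correct: it is the standard derivation of the symmetric Lov\'{a}sz Local Lemma via induction on conditional probabilities, exactly as in Alon and Spencer. Note that the paper itself offers no proof of this lemma at all --- it is quoted as a known result (the symmetric Local Lemma), so there is no ``paper's approach'' to compare against; your argument is the canonical one and all the key steps (splitting $S$ into $S\cap D(i)$ and $S\setminus D(i)$, collapsing the numerator to $\Pr(A_i)$ by mutual independence, peeling off the at most $d$ events of $S_1$ with the induction hypothesis, and the inequality $\bigl(1-\tfrac{1}{d+1}\bigr)^{d}>\tfrac{1}{e}$) are right. The only bookkeeping point you should make explicit when writing it up is that the conditional probabilities you manipulate are well defined, i.e.\ that $\Pr\bigl(\bigcap_{j\in S}\overline{A_j}\bigr)>0$ for every $S$; this follows by the same induction (the chain-rule product you use at the end already gives the positive lower bound $\bigl(1-\tfrac{1}{d+1}\bigr)^{|S|}$ for every partial intersection), but it needs to be folded into the induction rather than deferred to the final step.
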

This lemma gives almost immediately the following result.
\begin{theorem}
Every graph $G$ of maximum degree $\Delta$ satisfies $\mu (G)\leqslant e(\Delta+1)$.
\end{theorem}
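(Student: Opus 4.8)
The plan is to apply the symmetric Lov\'asz Local Lemma stated above to a random coloring produced by Demon. Fix an arbitrary system of strategies $F_1,\dots,F_n$ on $G$ together with a palette $\Gamma$ of $k$ colors, and let Demon color the vertices independently and uniformly at random, so that each $x_i$ is a uniform element of $\Gamma$. Let $A_i$ be the event that Bear $i$ guesses correctly, i.e. that $F_i(x_1,\dots,x_n)=x_i$. Since $F_i$ ignores the coordinate $x_i$ (it depends only on the colors of the neighbors of $i$), the guess of Bear $i$ is a function of $\{x_j : j\in N(i)\}$ alone, and because $x_i$ is uniform and independent of those variables we obtain $\Pr(A_i)=1/k=:p$.

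First I would pin down the correct dependency graph, which is the only real subtlety. The naive bound, declaring $A_i$ and $A_j$ dependent whenever their variable sets $N[i]$ and $N[j]$ intersect, forces one to join all pairs of vertices at distance at most $2$ and yields a dependency degree of order $\Delta^2$, which is too weak for the claimed bound. The observation that saves a whole factor of $\Delta$ is that, conditionally on all colors except $x_i$, the value $F_i(x_1,\dots,x_n)$ is already determined, so $\Pr\!\left(A_i \mid (x_j)_{j\neq i}\right)=1/k$ identically; consequently $A_i$ is independent of every event measurable with respect to $(x_j)_{j\neq i}$. Now if $S$ is any set of vertices with $i\notin S$ and $S\cap N(i)=\varnothing$, then for each $j\in S$ we have $i\notin N[j]$, so every Boolean combination of the events $\{A_j : j\in S\}$ is measurable with respect to $(x_j)_{j\neq i}$; hence $A_i$ is mutually independent of the whole family $\{A_j : j\notin N[i]\}$. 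Therefore in the dependency graph each $A_i$ is joined only to the events indexed by the neighbors of $i$, and the dependency degree is at most $\Delta$.

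With $p=1/k$ and $d=\Delta$, the hypothesis $ep(d+1)\leqslant 1$ of the Local Lemma reads $e(\Delta+1)\leqslant k$. Thus, as soon as $k\geqslant e(\Delta+1)$, the Local Lemma gives $\Pr\!\left(\bigcap_{i=1}^{n}\overline{A_i}\right)>0$, so there exists a color assignment under which no Bear guesses correctly, that is, a \emph{demonic coloring} with respect to $F_1,\dots,F_n$. Since the strategies were arbitrary, no winning strategy can exist once the number of colors reaches $e(\Delta+1)$, and therefore $\mu(G)\leqslant e(\Delta+1)$. I expect the mutual-independence step of the previous paragraph to be the main obstacle: obtaining the clean bound $e(\Delta+1)$ rather than the weaker $e(\Delta^2+1)$ hinges entirely on exploiting that $A_i$ concerns only whether the independent uniform variable $x_i$ matches a value fixed by the remaining coordinates.
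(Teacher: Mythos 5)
Your proof is correct and follows essentially the same route as the paper: a uniformly random coloring, the observation that $\Pr(A_i\mid (x_j)_{j\neq i})=1/k$ so that $A_i$ is mutually independent of all $A_j$ with $j\notin N[i]$, and the symmetric Local Lemma with $p=1/k$, $d=\Delta$. Your treatment of the dependency graph is just a more careful write-up of the independence claim the paper states informally.
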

\begin{proof}
Let $k$ be the number of colors. Suppose that Bears fixed their strategies and Demon plays randomly. Let $A_i$ denote the event that the $i$-th Bear guessed correctly. Then $\Pr(A_i)=1/k$. It is not hard to check that each event $A_i$ is mutually independent of all other events $A_j$, except those for which $i$ and $j$ are adjacent in $G$. This can be explained as follows: if colors of all neighbors of $i$ are fixed, then the color guessed by the $i$-th Bear is uniquely determined. The event $A_i$ reduces then to randomly picking this color by Demon, which certainly does not depend on what happens in the remaining part of the graph. So, we may apply Lemma 1 for $p=1/k$ and $d=\Delta$, which immediately gives the assertion of the theorem.
\end{proof}

\begin{lemma}\label{lem:ub}
	Let $k$ be a positive integer and let $G=(V,E)$ be a graph of order $n$. Assume that there is a partition of $V = V_1\cup\cdots\cup V_{\ell}$ such that $V_i$ is an independent set for each $i\in[\ell]$. Then, if
	\begin{equation}\label{eq:lem1}
	\ell   - \sum_{i=1}^\ell \left( \frac{k-1}{k}\right)^{|V_i|} < 1,
	\end{equation}
	then $\mu(G) \leqslant k-1$.
\end{lemma}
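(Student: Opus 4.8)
The plan is to prove the contrapositive form of the conclusion: I will show that with a palette of exactly $k$ colors Demon can always win, i.e.\ that for every choice of strategies $F_i$ there is a demonic coloring in which no Bear guesses correctly. This immediately gives $\mu(G)\leqslant k-1$. As in Theorem~\ref{th_clique} and in the maximum-degree bound, the tool is the probabilistic method: fix arbitrary strategies, let Demon color the vertices independently and uniformly at random from $k$ colors, and show that the ``bad'' event that some Bear guesses correctly has probability strictly less than $1$.

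The decisive idea is to group the correctness events not by individual vertices but by the blocks $V_i$ of the partition. For each $i$ let $E_i$ be the event that at least one Bear occupying a vertex of $V_i$ guesses correctly. Since $V_i$ is an independent set, every neighbor of a vertex $v\in V_i$ lies outside $V_i$; hence, once the colors on $V\setminus V_i$ are fixed, the guess of each Bear in $V_i$ is completely determined, while the colors inside $V_i$ remain independent and uniform. Conditioning on the coloring of $V\setminus V_i$, each Bear in $V_i$ avoids its (now fixed) guess with probability $\frac{k-1}{k}$, and by independence these avoidances multiply, giving
$$\Pr(\overline{E_i}) = \left(\frac{k-1}{k}\right)^{|V_i|}$$
uniformly over the outside coloring. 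This \emph{exact} evaluation, rather than a mere upper bound, is what makes the blockwise grouping pay off.

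With these probabilities in hand the rest is a union bound over the $\ell$ blocks:
$$\Pr\left(\bigcup_{i=1}^{\ell}E_i\right)\leqslant \sum_{i=1}^{\ell}\Pr(E_i)=\sum_{i=1}^{\ell}\left(1-\left(\frac{k-1}{k}\right)^{|V_i|}\right)=\ell-\sum_{i=1}^{\ell}\left(\frac{k-1}{k}\right)^{|V_i|}.$$
Under hypothesis~(\ref{eq:lem1}) this quantity is $<1$, so $\Pr\left(\bigcap_{i=1}^{\ell}\overline{E_i}\right)>0$. Any coloring realizing this intersection is precisely a demonic coloring for the chosen strategies, and since the strategies were arbitrary we conclude $\mu(G)\leqslant k-1$.

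As for where the difficulty lies, there is no genuinely hard analytic step; the main thing to get right is the conditioning argument yielding the exact blockwise probability. One must argue carefully that $\Pr(\overline{E_i})$ does not depend on the outside coloring, so that the conditional value coincides with the unconditional one, and that the independence of the colors within $V_i$ is legitimately invoked. The independence of $V_i$ is exactly what licenses treating each Bear's guess as a constant while its block-mates' colors vary. It is also worth remarking that the bound is informative only when $\ell$ is close to $\chi(G)$ and the blocks are large, which is precisely the regime that inequality~(\ref{eq:lem1}) is designed to capture.
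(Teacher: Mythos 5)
Your proof is correct and is essentially the paper's argument: the paper counts, for each block $V_i$, exactly $(k-1)^{|V_i|}k^{n-|V_i|}$ colorings on which every Bear in $V_i$ misses and then sums over blocks, which is precisely your uniform-measure computation of $\Pr(\overline{E_i})=\left(\frac{k-1}{k}\right)^{|V_i|}$ followed by a union bound. The only difference is probabilistic versus counting phrasing, which is immaterial.
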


\begin{proof}
	Assume that \eqref{eq:lem1} holds and $\mu(G) \geqslant k$. That means that with $k$ colors Bears can always win.
	Fix a strategy for each of the $n$ players. For a fixed $i$ there are exactly $(k-1)^{|V_i|}k^{n-|V_i|}$ colorings for which no player in $V_i$ guesses his color. Thus, there are $k^n - (k-1)^{|V_i|}k^{n-|V_i|}$ colorings for which at least one of the players in $V_i$ guesses his color. Consequently, there are at most
	\[
	\sum_{i=1}^{\ell} \left( k^n - (k-1)^{|V_i|}k^{n-|V_i|} \right)
	\]
	colorings such that there is a player who guesses his color. If this number is less then the total number of colorings (which is equivalent to~\eqref{eq:lem1}), then the adversary can choose a coloring for which none of the players will guess the color of his hat, a contradiction.
\end{proof}

\begin{theorem}\label{thm:ub}
	Let $k$ be a positive integer and let $G=(V,E)$ be a graph of order $n$ with chromatic number $h$. Then, if
	\[
	k > \frac{1}{1 - \left( 1 - \frac{1}{h} \right)^{h/n}},
	\]
	then $\mu(G) \leqslant k-1$.
\end{theorem}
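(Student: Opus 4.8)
The plan is to feed the chromatic partition of $G$ directly into Lemma~\ref{lem:ub}. Since $G$ has chromatic number $h$, I would fix a proper coloring and let $V=V_1\cup\cdots\cup V_h$ be its color classes, which are independent sets, so that $\ell=h$. The whole theorem then reduces to verifying the hypothesis~\eqref{eq:lem1} of the lemma for this particular partition, that is, to showing that $h-\sum_{i=1}^{h}\left(\tfrac{k-1}{k}\right)^{|V_i|}<1$ whenever $k$ exceeds the stated threshold.

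To handle the sum $\sum_{i=1}^{h}\left(\tfrac{k-1}{k}\right)^{|V_i|}$ for unknown part sizes, I would use convexity. Writing $q=\tfrac{k-1}{k}=1-\tfrac1k\in(0,1)$, the function $x\mapsto q^{x}=e^{x\ln q}$ is convex on $\mathbb{R}$ (its second derivative $(\ln q)^2 q^{x}$ is positive). Because $\sum_{i=1}^{h}|V_i|=n$, Jensen's inequality with equal weights $1/h$ gives
$$\frac1h\sum_{i=1}^{h}q^{|V_i|}\ \geqslant\ q^{\frac1h\sum_{i=1}^{h}|V_i|}=q^{n/h},$$
so that $\sum_{i=1}^{h}q^{|V_i|}\geqslant h\,q^{n/h}$. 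It therefore suffices to guarantee the cleaner inequality $h\,q^{n/h}>h-1$, independently of how the vertices are distributed among the color classes.

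Finally I would unwind the algebra. The inequality $h\,q^{n/h}>h-1$ is equivalent to $q^{n/h}>1-\tfrac1h$, and raising both sides to the power $h/n$ turns this into $q>\left(1-\tfrac1h\right)^{h/n}$, i.e. $1-\tfrac1k>\left(1-\tfrac1h\right)^{h/n}$, which rearranges to exactly $k>\big(1-(1-\tfrac1h)^{h/n}\big)^{-1}$, the hypothesis of the theorem. Hence~\eqref{eq:lem1} holds and Lemma~\ref{lem:ub} delivers $\mu(G)\leqslant k-1$. The only step with genuine content is the convexity argument: among all partitions of the $n$ vertices into $h$ independent classes, the sum $\sum q^{|V_i|}$ is smallest when the classes are balanced, and this is what lets me replace the unknown sizes $|V_i|$ by the uniform value $n/h$ and collapse the condition into the stated closed form. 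I do not expect a real obstacle here, since once the chromatic partition is chosen the remaining work is the routine Jensen estimate together with the equivalent rewriting of the threshold.
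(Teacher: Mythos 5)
Your proposal is correct and follows essentially the same route as the paper: feed the chromatic partition into Lemma~\ref{lem:ub}, apply Jensen's inequality to the convex function $x\mapsto\left(\tfrac{k-1}{k}\right)^{x}$ to reduce to balanced class sizes, and then unwind the threshold inequality algebraically. No substantive differences.
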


\begin{proof}
	Let $V = V_1\cup\dots\cup V_{h}$ be a color partition. By Lemma~\ref{lem:ub} it suffices to show that 
	\[
	h   - \sum_{i=1}^h \left( \frac{k-1}{k}\right)^{|V_i|} < 1.
	\]
	Since $f(x) = \left( \frac{k-1}{k}\right)^x$ is a convex function, Jensen's inequality yields that
	\[
	\sum_{i=1}^h \left( \frac{k-1}{k}\right)^{|V_i|} \ge h \left( \frac{k-1}{k}\right)^{n/h}
	\]
	and consequently
	\begin{equation}\label{eq:calc1}
	h   - \sum_{i=1}^h \left( \frac{k-1}{k}\right)^{|V_i|} 
	\le h - h \left( \frac{k-1}{k}\right)^{n/h} 
	= h - h \left( 1-\frac{1}{k}\right)^{n/h}. 
	\end{equation}
	Finally, observe that by assumption $1 - \left( 1 - \frac{1}{h} \right)^{h/n} > \frac{1}{k}$
	and so $1 - \frac{1}{k} > \left( 1 - \frac{1}{h} \right)^{h/n}$,
	which implies in~\eqref{eq:calc1} that
	\[
	h   - \sum_{i=1}^h \left( \frac{k-1}{k}\right)^{|V_i|} < h - h \left( 1 - \frac{1}{h} \right) = 1.
	\]
\end{proof}

\begin{corollary}
	Every graph $G$ of order $n$ satisfies $\mu(G)\leqslant n$. Moreover, if $G$ is not a clique, then $\mu(G) < n$.
\end{corollary}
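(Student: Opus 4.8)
The plan is to derive both statements from Lemma~\ref{lem:ub} by choosing suitable partitions of $V$ into independent sets, rather than routing through the chromatic number via Theorem~\ref{thm:ub}; this keeps every computation elementary and, as it turns out, gives the strict inequality for free.

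For the bound $\mu(G)\leqslant n$, I would take the trivial partition of $V$ into $n$ singletons $V_i=\{i\}$ (each of which is automatically independent) and apply Lemma~\ref{lem:ub} with $k=n+1$. Since $|V_i|=1$ for all $i$, the left-hand side of~\eqref{eq:lem1} becomes
\[
n-\sum_{i=1}^{n}\left(\frac{n}{n+1}\right)=n\left(1-\frac{n}{n+1}\right)=\frac{n}{n+1}<1,
\]
so the hypothesis of the lemma is met and $\mu(G)\leqslant (n+1)-1=n$. This is essentially the union bound already used in Theorem~\ref{th_clique}, recast through the lemma.

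For the strict inequality when $G$ is not a clique, the key observation is that $G$ then contains a pair of non-adjacent vertices $u,v$. I would use the partition whose classes are the single independent set $\{u,v\}$ together with the remaining $n-2$ vertices as singletons, so that $\ell=n-1$, and apply Lemma~\ref{lem:ub} with $k=n$. Writing $q=(n-1)/n$, the left side of~\eqref{eq:lem1} equals
\[
(n-1)-\bigl(q^2+(n-2)q\bigr),
\]
and a one-line computation gives $q^2+(n-2)q=(n-2)+\tfrac{1}{n^2}$, so the expression above is exactly $1-\tfrac{1}{n^2}<1$. Hence Lemma~\ref{lem:ub} yields $\mu(G)\leqslant n-1<n$.

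I do not expect a genuine obstacle here; the only real decision is the choice of partition in the second part. Merging one non-adjacent pair into a size-$2$ class lowers $\ell$ by one relative to the all-singletons partition, and this is precisely what upgrades the borderline union bound $n/n=1$ into the strict value $1-\tfrac{1}{n^2}$. The remaining arithmetic is routine, and the main conceptual point is simply to notice that Lemma~\ref{lem:ub}, applied to these hand-picked partitions, delivers both bounds more directly than the slightly lossy Theorem~\ref{thm:ub} would.
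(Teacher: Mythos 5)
Your proof is correct, but it takes a genuinely different route from the paper's. The paper derives the corollary from Theorem~\ref{thm:ub}: it plugs in the chromatic number $h$, uses the fact that $h=n$ exactly when $G=K_n$, and for the non-clique case invokes the monotonicity of $x\mapsto(1-1/x)^{x/n}$ to show the threshold in Theorem~\ref{thm:ub} drops strictly below $n$; that theorem in turn rests on Jensen's inequality applied to a proper coloring. You instead apply Lemma~\ref{lem:ub} directly to two hand-picked partitions: all singletons with $k=n+1$ (which reproduces the union bound from Theorem~\ref{th_clique} and gives $\frac{n}{n+1}<1$), and, for a non-clique, the partition obtained by merging one non-adjacent pair into a two-element independent class with $k=n$, where the left side of~\eqref{eq:lem1} computes exactly to $1-\frac{1}{n^2}<1$ (your algebra checks out: $q^2+(n-2)q=n-2+\frac{1}{n^2}$ for $q=\frac{n-1}{n}$). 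Both arguments are valid; yours is more elementary and makes transparent precisely how much slack a single non-edge creates, while the paper's has the advantage of exhibiting the corollary as an immediate instance of its general chromatic-number bound and of showing that bound is tight for cliques. One should note that your second partition requires $n\geqslant 2$, which is automatic since a non-clique has at least two vertices.
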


\begin{proof}
	If $G=K_n$, then $h = h(G) = n$ and
	\[
	\frac{1}{1 - \left( 1 - \frac{1}{h} \right)^{h/n}} = n < n+1
	\]
	implying that $\mu(K_n) \leqslant n$. Otherwise, if $h = h(G) < n$, then observe that $f(x) = \left( 1 - \frac{1}{x} \right)^{x/n}$ is an increasing function and so $f(h) < f(n) = 1-\frac{1}{n}$. Consequently,
	\[
	\frac{1}{1 - \left( 1 - \frac{1}{h} \right)^{h/n}}
	< \frac{1}{1 - \left(1-\frac{1}{n} \right)} = n.
	\]
\end{proof}

\begin{corollary}
	Let $k$ be a positive integer and let $G=(V,E)$ be a graph of order $n$ with chromatic number $h=h(n)\geqslant 2$.  Then, 
	\[
	\mu(G) \leqslant \frac{n}{h\ln\left( \frac{h}{h-1} \right)}
	\]
	for sufficiently large $n$.
\end{corollary}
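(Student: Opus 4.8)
The plan is to feed the explicit threshold from Theorem~\ref{thm:ub} into a short asymptotic estimate. The first step is to restate that theorem as a bound on $\mu(G)$ itself: since it guarantees $\mu(G)\leqslant k-1$ for \emph{every} integer $k$ exceeding
\[
B \;:=\; \frac{1}{1-\left(1-\frac{1}{h}\right)^{h/n}},
\]
taking $k$ to be the least such integer yields $\mu(G)\leqslant B$ (indeed $\mu(G)\leqslant\lfloor B\rfloor$). The whole problem then reduces to estimating $B$ and comparing it with $\frac{n}{h\ln(h/(h-1))}$.

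The second step is to rewrite $B$ exponentially. Setting
\[
a \;=\; a(n,h)\;=\;\frac{h}{n}\,\ln\frac{h}{h-1},
\]
we have $\left(1-\frac1h\right)^{h/n}=e^{-a}$, so that $B=\frac{1}{1-e^{-a}}$, while the target quantity is precisely $\frac1a=\frac{n}{h\ln(h/(h-1))}$. The crucial preliminary fact is that $a\to 0$ as $n\to\infty$, \emph{uniformly} over all admissible $h=h(n)\in\{2,\dots,n\}$. This follows from the elementary chain
\[
a \;=\; \frac{h}{n}\,\ln\!\left(1+\frac{1}{h-1}\right)\;\leqslant\;\frac{h}{n}\cdot\frac{1}{h-1}\;=\;\frac{1}{n}\cdot\frac{h}{h-1}\;\leqslant\;\frac{2}{n},
\]
using $\ln(1+x)\leqslant x$ and $\frac{h}{h-1}\leqslant 2$ for $h\geqslant 2$. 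In particular the target satisfies $\frac1a\geqslant \frac n2\to\infty$.

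With $a\to 0$ in hand, the third step is a one-line Taylor expansion: from $1-e^{-a}=a-\frac{a^2}{2}+O(a^3)$ one obtains
\[
B \;=\; \frac{1}{1-e^{-a}}\;=\;\frac1a\cdot\frac{1}{1-\frac a2+O(a^2)}\;=\;\frac1a+\frac12+O(a)\;=\;\frac{n}{h\ln(h/(h-1))}+\frac12+O\!\left(\frac1n\right).
\]
Combined with $\mu(G)\leqslant B$, this delivers the stated bound up to the additive error term, and since the relative error is $O(1/n)$ the estimate is the asymptotically sharp leading term.

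I expect the genuine obstacle to lie in this last, deceptively harmless passage. Because $1-e^{-a}<a$ for every $a>0$, one always has $B>\frac1a$ strictly, so the clean inequality $\mu(G)\leqslant\frac1a$ cannot be read off from $\mu(G)\leqslant B$ alone; the expansion above shows the overshoot is exactly $\frac12+O(1/n)$. Any rigorous derivation of the stated form must therefore exploit that $\mu(G)$ is an integer while the target $\frac1a\geqslant n/2$ grows without bound, so that the $O(1)$ discrepancy is negligible \emph{relative} to $\frac1a$ and is absorbed once the bound is read to leading order for large $n$. This interplay between the integrality of $\mu(G)$ and the growth of the target is the only nonroutine ingredient; by contrast the uniform bound $a\leqslant 2/n$ above already removes the one place---control of $a\to0$ across all admissible $h=h(n)$---that might otherwise have caused trouble.
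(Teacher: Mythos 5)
Your proposal follows essentially the same route as the paper's proof: both rewrite $\left(1-\frac{1}{h}\right)^{h/n}$ as $e^{-a}$ with $a=\frac{h}{n}\ln\frac{h}{h-1}$, verify $a=o(1)$ (you via the uniform bound $a\leqslant 2/n$, the paper via monotonicity of $h\ln\frac{h}{h-1}$), expand $\frac{1}{1-e^{-a}}=\frac{1}{a}+\frac{1}{2}+o(1)$, and then invoke Theorem~\ref{thm:ub}. The one delicate point you explicitly flag --- that this yields only $\mu(G)\leqslant\frac{1}{a}+\frac{1}{2}+o(1)$ rather than $\mu(G)\leqslant\frac{1}{a}$, and that integrality alone does not obviously absorb the additive constant, since $\left\lfloor\frac{1}{a}+\frac{1}{2}\right\rfloor$ can exceed $\frac{1}{a}$ when the fractional part of $\frac{1}{a}$ is small --- is treated no more carefully in the paper, which simply asserts the conclusion at this stage, so your argument is faithful to (and more candid than) the published one.
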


\begin{proof}
	Clearly, 
	\[
	\left( 1 - \frac{1}{h} \right)^{h/n} 
	= \exp\left\{\frac{h}{n} \ln\left( 1 - \frac{1}{h} \right)\right\}
	= \exp\left\{-\frac{h}{n} \ln\left( \frac{h}{h-1} \right)\right\}.
	\]
	Now observe that $\frac{h}{n} \ln\left( \frac{h}{h-1} \right)=o(1)$ (as $n$ tends to infinity). Indeed, let $f(h) = h \ln\left( \frac{h}{h-1} \right)$. Then, it is easy to check that $f(h)$ is a decreasing function and so 
	\[
	2\ln(2) = f(2) \geqslant f(h) > \lim_{h\to\infty} f(h) =1.
	\] 
	Thus, $\lim_{n\to \infty} \frac{f(h)}{n} = 0$.
	
	Let $x = \frac{h}{n} \ln\left( \frac{h}{h-1} \right)$. Hence, $x>0$ and $x=o(1)$. Since $e^{-x} \leqslant  1 - x + \frac{x^2}{2} < 1$, we get
	\begin{align*}
	\frac{1}{1 - \left( 1 - \frac{1}{h} \right)^{h/n}}
	&= \frac{1}{1 -e^{-x}}
	\leqslant \frac{1}{x-\frac{x^2}{2}}
	= \frac{1}{x} + \frac{1}{2-x} 
	= \frac{1}{x} + \frac{1}{2} + \frac{x}{4-2x} 
	= \frac{1}{x} + \frac{1}{2} + o(1). 
	\end{align*}
	Thus, Theorem~\ref{thm:ub} yields that $\mu(G) \leqslant \frac{1}{x}$ for large $n$.
\end{proof}

\begin{corollary}
	For almost all graphs $G$ of order~$n$ we have
	\[
	(2+o(1))\log_2 n \leqslant \mu(G) \leqslant n - (1+o(1))\log_2 n.
	\]
\end{corollary}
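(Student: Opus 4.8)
The plan is to read ``almost all graphs'' in the usual sense of the binomial random graph $G=G(n,1/2)$ (equivalently, the uniform distribution over all labelled graphs on $[n]$) and to prove the two bounds separately, each holding asymptotically almost surely (a.a.s.); since both hold a.a.s.\ they hold simultaneously a.a.s. The two external inputs I would quote from random graph theory are the classical concentration results that a.a.s.\ the clique number satisfies $\omega(G)=(2+o(1))\log_2 n$ and the chromatic number satisfies $h(G)=(1+o(1))\frac{n}{2\log_2 n}$ (Bollob\'{a}s).

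For the lower bound I would first record a \emph{monotonicity principle}: if $H$ is a subgraph of $G$ (with $V(H)\subseteq V(G)$ and $E(H)\subseteq E(G)$), then $\mu(G)\geq\mu(H)$. This is immediate: let the Bears occupying $V(H)$ play an optimal $H$-strategy, which is legitimate on $G$ because every $H$-neighbour of a vertex is also a $G$-neighbour, so each such strategy still depends only on visible hats, and let the remaining Bears guess arbitrarily. For any coloring of $G$, restricting it to $V(H)$ and applying the winning $H$-strategy shows that some Bear of $H$ guesses correctly. Taking $H=K_{\omega(G)}$ and invoking Theorem~\ref{th_clique}, which gives $\mu(K_m)=m$, yields $\mu(G)\geq\omega(G)=(2+o(1))\log_2 n$ a.a.s.

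For the upper bound I would apply the corollary bounding $\mu(G)$ by $\frac{n}{h\ln(h/(h-1))}$ (the consequence of Theorem~\ref{thm:ub} derived above) with $h=h(G)$ the chromatic number. From the expansion $-\ln(1-1/h)=\frac1h+\frac{1}{2h^2}+\cdots$ one gets $h\ln\frac{h}{h-1}=1+\frac{1}{2h}+O(h^{-2})$ and hence
\[
\frac{n}{h\ln\frac{h}{h-1}}=n-\frac{n}{2h}+O\!\left(\frac{n}{h^2}\right).
\]
Substituting the a.a.s.\ value $h=(1+o(1))\frac{n}{2\log_2 n}$ gives $\frac{n}{2h}=(1+o(1))\log_2 n$ and $\frac{n}{h^2}=o(1)$, so $\mu(G)\leq n-(1+o(1))\log_2 n$ a.a.s.

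The two asymptotic expansions are routine; the only points needing genuine care are (i) the clean statement and proof of the subgraph-monotonicity of $\mu$, and (ii) the legitimacy of feeding the a.a.s.\ values of $\omega(G)$ and $h(G)$ into the deterministic bounds. Point (ii) is unproblematic because the relevant functions are monotone in these parameters over the ranges in question, so the $(1+o(1))$ fluctuations of $\omega(G)$ and $h(G)$ pass directly through to the final $(1+o(1))$ coefficients. I expect the main (though still standard) obstacle to be the invocation of the concentration of the chromatic number of $G(n,1/2)$, as that is the least elementary external ingredient.
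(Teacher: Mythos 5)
Your proposal is correct and follows essentially the same route as the paper: the lower bound via subgraph monotonicity applied to a clique of order $(2+o(1))\log_2 n$, and the upper bound by feeding the a.a.s.\ bound $h(G)\leqslant(1+o(1))\frac{n}{2\log_2 n}$ into the preceding corollary $\mu(G)\leqslant\frac{n}{h\ln(h/(h-1))}$. The only difference is cosmetic: the paper works with explicit inequalities (e.g.\ $h\ln\frac{h}{h-1}\geqslant 1+\frac{1}{2h-1}$ and an explicit $\varepsilon_n$ in the chromatic number bound) where you use asymptotic expansions, and your monotonicity observation justifying the substitution of the a.a.s.\ value of $h$ matches the paper's implicit use of the decreasingness of $h\ln\frac{h}{h-1}$.
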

\begin{proof}
	The lower bound is trivial, since almost all graphs contain a clique of order $(2+o(1))\log_2 n$. The upper bound will follow from the previous corollary. First, since $\ln(1+x) \geqslant x-\frac{x^2}{2} + \frac{x^3}{4}\geqslant 0$ for $0\leqslant x \leqslant 0.44$, we obtain that
	\begin{align*}
	\ln\left( \frac{h}{h-1} \right)
	=\ln\left( 1+\frac{1}{h-1} \right)
	&\geqslant \frac{1}{h-1} - \frac{1}{2(h-1)^2} + \frac{1}{4(h-1)^3}\\
	&= \frac{2}{2h-1} + \frac{1}{4(h-1)^3(2h-1)}
	\geqslant \frac{2}{2h-1}
	\end{align*}
	and consequently
	\[
	h \ln\left( \frac{h}{h-1} \right)
	\geqslant \frac{2h}{2h-1}
	= 1 + \frac{1}{2h-1}.
	\]
	Next recall that for almost all graphs $h(G) \leqslant \frac{n}{2\log_2 n} \cdot \frac{1}{1-\varepsilon_n}$, where $\varepsilon_n = \frac{4\log_2\log_2 n}{\log_2 n}$. Thus,
	\begin{align*}
	\mu(G) &\leqslant \frac{n}{h\ln\left( \frac{h}{h-1} \right)}
	\leqslant \frac{n}{1+ \frac{1}{\frac{n}{(1-\varepsilon_n)\log_2 n}-1} }\\
	&= n-(1-\varepsilon_n)\log_2 n
	= n - \log_2 n + 4\log_2\log_2 n =  n - (1+o(1))\log_2 n,
	\end{align*}
	as required.
\end{proof}

\section{Graphs of bounded density}
Any nonempty set of the form $C=C_1\times C_2\times \dots \times C_n$ will be called a \emph{cube}. A set $C_i$ is then called the \emph{$i$-th component} of the cube $C$. By $\Gamma_k$ we denote the set of $k$ colors ($k$-th complex roots of unity, for instance).

\subsection{Trees}
We start with a simple proof that $\mu(T)=2$ for every tree $T$ with at least two vertices. The lower bound $\mu(T)\geqslant2$ follows from $\mu(K_2)=2$ and an easy observation that $\mu(H)\leqslant\mu(G)$ whenever $H$ is a subgraph of $G$. For the upper bound we will need the following definition. Let $T$ be a tree with root $r$. Let $F_r$ be any strategy for $r$. A color $d\in \Gamma_k$ is said to be \emph{dominant} for $F_r$ if $F_r^{-1}(d)$ contains a cube whose each component have size at least $(k-1)$.

\begin{lemma}\label{Lemma dominant}
Let $T$ be a tree with root $r$. If $k\geqslant 3$, then for any strategy $F_r$ there exists at most one dominant color in $\Gamma_k$.
\end{lemma}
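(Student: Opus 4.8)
The plan is to argue by contradiction, reducing the statement to a purely combinatorial fact about overlapping cubes. Suppose that two distinct colors $d,d'\in\Gamma_k$ were both dominant for $F_r$. Then by definition there are cubes $C=C_1\times\cdots\times C_n$ and $C'=C'_1\times\cdots\times C'_n$ with $|C_i|,|C'_i|\geqslant k-1$ for every $i$, such that $F_r$ is constantly equal to $d$ on $C$ and constantly equal to $d'$ on $C'$. The whole point is that these two cubes cannot be disjoint when $k\geqslant 3$: if I can exhibit a single point $a\in C\cap C'$, then $F_r(a)=d$ and $F_r(a)=d'$ simultaneously, forcing $d=d'$, which is the desired contradiction.

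So the heart of the matter is to show $C\cap C'\neq\emptyset$. Since $C\cap C'=\prod_{i=1}^n (C_i\cap C'_i)$, it suffices to check that each factor $C_i\cap C'_i$ is nonempty, and here I would simply use inclusion–exclusion inside the $k$-element set $\Gamma_k$: because $C_i,C'_i\subseteq\Gamma_k$,
$$|C_i\cap C'_i|\geqslant |C_i|+|C'_i|-|\Gamma_k|\geqslant (k-1)+(k-1)-k=k-2.$$
For $k\geqslant 3$ this is at least $1$, so every factor is nonempty, hence the product cube $C\cap C'$ is a nonempty set of the required form; choosing any $a$ in it finishes the proof.

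I do not expect a genuine obstacle here. The only thing to notice is that the hypothesis $k\geqslant 3$ enters exactly through the inequality $2(k-1)>k$, which is precisely what guarantees that two components, each missing at most one color of $\Gamma_k$, must still share a common color. It is worth remarking that the tree structure of $T$ plays no role in this particular lemma: the argument uses only that $F_r$ is a single-valued function together with the size condition defining dominance, so I would phrase the proof entirely in terms of $F_r$ and $\Gamma_k$. (For $k=2$ the inequality fails, and a strategy that copies the color of a single neighbor makes both colors dominant, so the condition $k\geqslant 3$ is sharp.)
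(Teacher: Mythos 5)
Your proof is correct and follows essentially the same route as the paper: both argue that two cubes whose components each have size at least $k-1$ inside a $k$-element color set must intersect, yielding a point where $F_r$ takes two values at once. Your inclusion--exclusion computation $|C_i\cap C'_i|\geqslant 2(k-1)-k=k-2\geqslant 1$ just makes explicit the overlap step that the paper states in one line.
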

\begin{proof}
Suppose that there are two distinct dominant colors $a$ and $b$ for the root $r$. Let $C_a$ and $C_b$ denote the corresponding cubes contained in $F_r^{-1}(a)$ and $F_r^{-1}(b)$, respectively. Since each component of every cube is of size at least $k-1$, these components must overlap. Hence, $C_a\cap C_b\neq\emptyset$.  So, there is an element $x\in \Gamma_k^n$ such that $F_r(x)=a$ and $F_r(x)=b$. This means that $a=b$.
\end{proof}

Now we may prove the aforementioned result for the hat chromatic number of trees.

\begin{theorem}\label{Theorem Trees}
Let $T$ be a tree with root $r$, and let $F_i$ be fixed strategies on $T$. If $k\geqslant 3$, then for every color $c\in \Gamma_k$ which is not dominant for $F_r$, there is a demonic coloring $f$ such that $f(r)=c$. In consequence, every tree $T$ with at least two vertices satisfies $\mu (T)=2$.
\end{theorem}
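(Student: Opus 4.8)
The plan is to prove the displayed claim by induction on $|V(T)|$ and then read off $\mu(T)=2$ as a corollary. I would keep the inductive statement exactly as in the theorem: for every rooted tree $T$ with root $r$, every system of strategies $F_i$ on $T$, and every color $c\in\Gamma_k$ that is \emph{not} dominant for $F_r$, there is a demonic coloring $f$ of $T$ with $f(r)=c$. The reason for carrying the root and the non-dominant color explicitly is that the statement is then precisely strong enough to feed back into itself when we descend into subtrees. The base case is a single-vertex tree: there $F_r$ is a constant, say $F_r\equiv d$, so $d$ is the unique dominant color (the empty cube has every component vacuously of size at least $k-1$); any non-dominant $c$ is a color $c\neq d$, and coloring $r$ by $c$ gives $F_r(f)=d\neq c=f(r)$, a demonic coloring.

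For the inductive step, let $r$ have children $v_1,\dots,v_s$ rooting subtrees $T_1,\dots,T_s$, and let $c$ be non-dominant for $F_r$. I want to color $r$ by $c$ and extend demonically. The tree structure makes this local: once $r$ is frozen at $c$, each subtree $T_t$ interacts with the rest of $T$ only through $r$. Concretely, I would define on $T_t$ the restricted strategies obtained by fixing $r$'s color at $c$; the root $v_t$ then gets the strategy $F_{v_t}^{\,c}$, which reads only $v_t$'s children and is therefore a genuine strategy on $T_t$, while every deeper vertex keeps its original strategy unchanged. A demonic coloring of $T_t$ for these restricted strategies is exactly a coloring of $T_t$ in which every one of its vertices guesses wrongly, given $f(r)=c$. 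Thus it suffices to choose colors $c_1,\dots,c_s$ for the children so that (a) each $T_t$ can be completed demonically with $f(v_t)=c_t$, and (b) $r$ itself guesses wrongly, i.e.\ $F_r(c_1,\dots,c_s)\neq c$.

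The crux is securing (a) and (b) simultaneously. For (a), the inductive hypothesis applied to $T_t$ shows it suffices that $c_t$ be non-dominant for $F_{v_t}^{\,c}$; by Lemma~\ref{Lemma dominant} the strategy $F_{v_t}^{\,c}$ has at most one dominant color, so the set $S_t\subseteq\Gamma_k$ of admissible $c_t$ satisfies $|S_t|\geq k-1$. For (b), suppose toward a contradiction that $F_r(c_1,\dots,c_s)=c$ for every tuple $(c_1,\dots,c_s)\in S_1\times\cdots\times S_s$. Then $S_1\times\cdots\times S_s$ (padded by $\Gamma_k$ on the coordinates not corresponding to children of $r$) is a cube contained in $F_r^{-1}(c)$ whose every component has size at least $k-1$, making $c$ dominant for $F_r$ and contradicting the choice of $c$. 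Hence some admissible tuple satisfies $F_r(c_1,\dots,c_s)\neq c$; coloring $r$ by $c$, the children by this tuple, and completing each $T_t$ demonically by the inductive hypothesis yields the desired coloring. I expect this dominance/cube dichotomy to be the only real obstacle; everything else is bookkeeping enabled by $k\geq 3$, which guarantees $k-1\geq 2$ so the sets $S_t$ remain nonempty after deleting a dominant color, and which is the hypothesis of Lemma~\ref{Lemma dominant}.

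Finally, to deduce $\mu(T)=2$ for a tree $T$ with at least two vertices: the bound $\mu(T)\geq 2$ is already recorded via $\mu(K_2)=2$ together with subgraph-monotonicity of $\mu$. For the matching upper bound, take $k=3$ and arbitrary strategies $F_i$; since $F_r$ has at most one dominant color, some $c\in\Gamma_3$ is non-dominant, and the claim just proved produces a demonic coloring with $f(r)=c$. Therefore with three colors the Demon always wins, so $\mu(T)\leq 2$, and equality follows.
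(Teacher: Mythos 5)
Your proposal is correct and follows essentially the same route as the paper: induction on the number of vertices, restricting the strategies of the root's children by fixing $f(r)=c$, using Lemma~\ref{Lemma dominant} to get at least $k-1$ admissible colors per child, and deriving a contradiction with the non-dominance of $c$ if no admissible tuple makes $F_r$ guess wrongly. The only cosmetic difference is that the paper removes exactly one (possibly arbitrary) color per child to form a cube with components of size exactly $k-1$, whereas you take the full set of non-dominant colors; the argument is the same.
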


\begin{proof}
	Let $T$ be a tree on the set of vertices $V=\{1,2,\dots,n\}$. We use induction on $n$. If $n=1$, then $F_r$ is a constant function of one variable, that is, $F_r(x)=d$ for some $d\in \Gamma_k$. Notice that $d$ is the unique dominant color for $F_r$. So, taking $f(r)=c$ for any color $c$ different than $d$ defines a demonic coloring of $T$.
	
	For the inductive step, let $r_1,r_2,\dots, r_t$ denote the neighbors of $r$ in $T$. Let $T_1,T_2,\dots, T_t$ denote connected components of $T-r$. Choose $r_i$ to be the root of $T_i$. Now, let $c$ be any non-dominant color for $F_r$. Denote by $F_{r_i}^{(c)}$ the restriction of strategy $F_{r_i}$ obtained by putting $x_r=c$. Assume that $d_i$ is a dominant color for $F_{r_i}^{(c)}$ (or any color if such does not exist). Put $A_{r_i}=\Gamma_k \setminus \{d_i\}$ for $i=1,2,\dots,t$, and $A_j=\Gamma_k$ for all other $j\neq r_i$. Since $c$ is not dominant for $F_r$, the whole cube $A_1\times\dots\times A_n$ cannot be contained in $F_r^{-1}(c)$. Hence, there must exist $a_i\in A_i$ such that $F_r(a_1,a_2,\dots,a_n)\neq c$. By Lemma \ref{Lemma dominant}, none of the colors $a_{r_i}$ is dominant for $F_{r_i}^{(c)}$. Hence, by inductive hypothesis, there exist demonic colorings $f_i$ of trees $T_i$ such that $f_i(r_i)=a_{r_i}$. Now we may define a coloring $f$ by taking $f(r)=c$ and $f(v)=f_i(v)$ for all other vertices $v$ of $T$. Clearly, $f$ is a demonic coloring of $T$, and the proof is complete.
\end{proof}

\subsection{Multiple guesses}

Consider now a modified hat guessing game in which we allow each Bear to guess $s$ times, where $s\geqslant 1$ is a fixed integer. In other words, each Bear picks a subset of $s$ colors, and they win if at least one Bear hit an actual color of his hat. Let $\mu_s(G)$ denote the analog of the hat chromatic number $\mu(G)$. We can easily generalize the results concerning trees. Let $T$ be a tree with root $r$. Let $F_r$ be any strategy for $r$. A color $d\in \Gamma_k$ is said to be \emph{$s$-dominant} for $F_r$ if $F_r^{-1}(d)=\{x\in\Gamma_k^n\ :\ d\in F_r(x)\}$ contains a cube whose each component have size at least $(k-s)$.

\begin{lemma}\label{Lemma s-dominant}
Let $T$ be a tree with root $r$. If $k>s(s+1)$, then there are at most $s$ $s$-dominant colors in $\Gamma$ for any strategy $F_r$.
\end{lemma}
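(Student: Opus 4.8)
The plan is to imitate the proof of Lemma~\ref{Lemma dominant} (which is exactly the case $s=1$), but to compare $s+1$ cubes simultaneously rather than just two. I would argue by contradiction. Suppose that $F_r$ admits $s+1$ pairwise distinct $s$-dominant colors $d_1,\dots,d_{s+1}$. By definition, for each $j$ the preimage $F_r^{-1}(d_j)=\{x\in\Gamma_k^n : d_j\in F_r(x)\}$ contains a cube $C_j=C_j^{(1)}\times\cdots\times C_j^{(n)}$ all of whose components satisfy $|C_j^{(i)}|\geqslant k-s$.

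The core of the argument is to produce a single point lying in all $s+1$ cubes at once, and I would do this coordinate by coordinate. Fix a coordinate $i$. Each component $C_j^{(i)}\subseteq\Gamma_k$ has size at least $k-s$, so its complement in $\Gamma_k$ has size at most $s$. Hence the union $\bigcup_{j=1}^{s+1}\bigl(\Gamma_k\setminus C_j^{(i)}\bigr)$ has size at most $s(s+1)$. Since $k>s(s+1)$ by hypothesis, this union is a proper subset of $\Gamma_k$, so the intersection $\bigcap_{j=1}^{s+1}C_j^{(i)}$ is nonempty. Choosing an element $a_i$ of this intersection in each coordinate yields a point $x=(a_1,\dots,a_n)$ belonging to $C_j$ for every $j$, and therefore to $\bigcap_{j=1}^{s+1}F_r^{-1}(d_j)$.

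To finish, I would unwind the definition of the preimage in the multiple-guess setting: $x\in F_r^{-1}(d_j)$ means precisely $d_j\in F_r(x)$. Thus all $s+1$ distinct colors $d_1,\dots,d_{s+1}$ lie in the guessed set $F_r(x)$, forcing $|F_r(x)|\geqslant s+1$. This contradicts the rule that each Bear guesses a set of at most $s$ colors, so there can be at most $s$ many $s$-dominant colors.

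The only delicate point, and the place where the numerical hypothesis is used, is the union bound in the second paragraph: one must ensure that $s+1$ subsets of a $k$-element universe, each missing at most $s$ elements, still share a common element, which is guaranteed exactly by $k>s(s+1)$. Everything else is a routine generalization of the $s=1$ case, where the same threshold reads $k>2$, i.e.\ $k\geqslant 3$, recovering Lemma~\ref{Lemma dominant}.
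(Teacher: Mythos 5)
Your proposal is correct and follows essentially the same route as the paper: both arguments take $s+1$ cubes witnessing $s$-dominance and show, coordinate by coordinate, that the components must intersect because the $s+1$ complements (each of size at most $s$) cannot cover $\Gamma_k$ when $k>s(s+1)$, yielding a point whose guessed set would contain $s+1$ colors. The only difference is cosmetic -- you phrase the key step as a union bound on complements while the paper writes the equivalent counting inequality $\lvert\bigcap_i C^i_j\rvert\geqslant k-\sum_i(k-\lvert C^i_j\rvert)$.
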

\begin{proof}
Suppose that there are $s+1$ distinct dominant colors $a_1,\ldots,a_{s+1}$ for the root $r$. Let $C^i=C^i_1\times\ldots\times C^i_n$ denote the cubes contained in $F_r^{-1}(a_i)$. For each $j\in{1,\ldots,n}$ we have
$$\left|\bigcap_{i=1}^{s+1} C^i_j\right|\geqslant k-\sum_{i=1}^{s+1}(k-|C_i|)\geqslant k-s(s+1)>0.$$
Hence, $\bigcap C^i\neq\emptyset$.  So, there is an element $x\in \Gamma_k^n$ such that $\{a_1,\ldots,a_{s+1}\}\subset F_r(x)$, a contradiction.
\end{proof}

\begin{theorem}\label{th_Trees-s}
	Every tree $T$ satisfies $\mu_s(T)\leqslant s(s+1)$ for every $s\geqslant 1$.
\end{theorem}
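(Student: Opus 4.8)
The plan is to lift the induction from the proof of Theorem~\ref{Theorem Trees} to the multiple-guess setting, replacing ``dominant'' by ``$s$-dominant'' and Lemma~\ref{Lemma dominant} by Lemma~\ref{Lemma s-dominant}. Precisely, I would prove by induction on $n=|V(T)|$ the following claim: \emph{if $k>s(s+1)$, then for any tree $T$ rooted at $r$ with strategies $F_i$, and for every color $c\in\Gamma_k$ that is not $s$-dominant for $F_r$, there is a demonic coloring $f$ with $f(r)=c$}, where demonic now means $x_i\notin F_i(x)$ for every $i$. Granting the claim, the theorem is immediate: taking $k=s(s+1)+1>s(s+1)$, Lemma~\ref{Lemma s-dominant} shows that $F_r$ has at most $s$ $s$-dominant colors, and since $k>s(s+1)>s$ there is a non-$s$-dominant color $c$; the claim then produces a demonic coloring, so the Bears cannot win with $k$ colors and $\mu_s(T)\leqslant s(s+1)$.

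For $n=1$ the strategy $F_r$ is a constant set $D\subseteq\Gamma_k$ with $|D|\leqslant s$, whose members are exactly the $s$-dominant colors, so any $c\notin D$ gives a demonic coloring via $f(r)=c$. For the inductive step I would follow Theorem~\ref{Theorem Trees} verbatim. Let $r_1,\dots,r_t$ be the neighbors of $r$, let $T_1,\dots,T_t$ be the components of $T-r$ rooted at $r_1,\dots,r_t$, and fix a non-$s$-dominant color $c$. Writing $F_{r_i}^{(c)}$ for the restriction of $F_{r_i}$ with $x_r=c$ and $D_i$ for its set of $s$-dominant colors, Lemma~\ref{Lemma s-dominant} gives $|D_i|\leqslant s$. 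Set $A_{r_i}=\Gamma_k\setminus D_i$, so $|A_{r_i}|\geqslant k-s$, and $A_j=\Gamma_k$ otherwise. Every component of $A_1\times\dots\times A_n$ has size at least $k-s$, so as $c$ is not $s$-dominant this cube cannot lie inside $F_r^{-1}(c)$; hence some $a$ in the cube satisfies $c\notin F_r(a)$. Each $a_{r_i}\in A_{r_i}$ is then non-$s$-dominant for $F_{r_i}^{(c)}$, so by induction each $T_i$ admits a demonic coloring $f_i$ with $f_i(r_i)=a_{r_i}$. Defining $f(r)=c$ and $f(v)=f_i(v)$ for $v\in T_i$ finishes the step: the Bear at $r$ fails since $F_r(f)=F_r(a)$ does not contain $c$, and the Bears in $T_i$ fail since $f_i$ is demonic for the strategies restricted by $x_r=c$ (only $F_{r_i}$ among the vertices of $T_i$ sees $x_r$).

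The structure is a faithful copy of the single-guess argument, so the essential new point—and the part I expect to require the most care—is the counting that keeps the construction feasible. Here one must delete up to $s$ colors (the set $D_i$) at each neighbor rather than a single one, pushing the cube components down to size $k-s$, and one must check that non-$s$-dominance of $c$, which by definition concerns cubes all of whose components have size at least $k-s$, still forbids containment of precisely this cube. The threshold $k>s(s+1)$ is exactly what guarantees $|D_i|\leqslant s$, hence $|A_{r_i}|\geqslant k-s$, at every level of the recursion; verifying that this inequality is maintained throughout is the crux of the proof.
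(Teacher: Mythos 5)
Your proposal is correct and follows essentially the same route as the paper: the paper's proof is the same induction on $|V(T)|$, replacing Lemma~\ref{Lemma dominant} by Lemma~\ref{Lemma s-dominant}, deleting the sets $D_i$ of (at most) $s$ dominant colors at each child, and using that a cube with all components of size at least $k-s$ cannot sit inside $F_r^{-1}(c)$ when $c$ is not $s$-dominant. Your version is if anything slightly cleaner, since you state explicitly the strengthened inductive claim (prescribing the root's color) that the paper uses only implicitly.
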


\begin{proof}
	Let $T$ be a tree on the set of vertices $V=\{1,2,\dots,n\}$. We use induction on $n$. If $n=1$, then $F_r$ is a constant function of one variable, that is, $F_r(x)=D$ for some $D\in \mathcal{P}(\Gamma_k)$ of cardinality $s$. Notice that $D$ is the set of dominant colors for $F_r$. So, taking $f(r)=c$ for any color $c\notin D$ defines a demonic coloring of $T$.
	
	For the inductive step, let $r_1,r_2,\dots, r_t$ denote the neighbors of $r$ in $T$. Let $T_1,T_2,\dots, T_t$ denote connected components of $T-r$. Choose $r_i$ to be the root of $T_i$. Now, let $c$ be any non-dominant color for $F_r$. Denote by $F_{r_i}^{(c)}$ the restriction of strategy $F_{r_i}$ obtained by putting $x_r=c$. Assume that $D_i$ are sets of $s$ colors containing all dominant colors for $F_{r_i}^{(c)}$. Put $A_{r_i}=\Gamma_k \setminus D_i$ for $i=1,2,\dots,t$, and $A_j=\Gamma_k$ for all other $j\neq r_i$. Since $c$ is not dominant for $F_r$, the whole cube $A_1\times\dots\times A_n$ cannot be contained in $F_r^{-1}(c)$. Hence, there must exist $a_i\in A_i$ such that $F_r(a_1,a_2,\dots,a_n)\neq c$. By Lemma \ref{Lemma s-dominant}, none of the colors $a_{r_i}$ is dominant for $F_{r_i}^{(c)}$. Hence, by inductive hypothesis, there exist demonic colorings $f_i$ of trees $T_i$ such that $f_i(r_i)=a_{r_i}$. Now we may define a coloring $f$ by taking $f(r)=c$ and $f(v)=f_i(v)$ for all other vertices $v$ of $T$. Clearly, $f$ is a demonic coloring of $T$, and the proof is complete.
\end{proof}

The following theorem motivates introducing the multiple guessing variant of the hat chromatic number.

\begin{theorem}\label{th_partition}
Let $G$ be a connected graph, and let $V=A\cup B$ be a partition of the vertex set of $G$. Let $d=\max \{|N(v)\cap A|\ :\ v\in B\}$. Then $\mu_s(G)\leqslant \mu_{s_1}(G[B])$, where $s_1=s(\mu_s(G[A])+1)^d$.
\end{theorem}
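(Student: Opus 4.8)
The plan is to prove the following transfer statement, from which the theorem follows immediately: \emph{if the Bears can win the $s$-guessing game on $G$ with $k$ colors, then they can win the $s_1$-guessing game on $G[B]$ with the same $k$ colors}. Indeed, taking $k=\mu_s(G)$ (at which value the Bears win on $G$ by the very definition of $\mu_s$ as the largest winnable number of colors) then shows that the Bears win the $s_1$-guessing game on $G[B]$ with $\mu_s(G)$ colors, whence $\mu_{s_1}(G[B])\geqslant\mu_s(G)$, which is the assertion. So I fix a winning family of strategies $(F_v)_{v\in V}$ for the $s$-guessing game on $G$ over the palette $\Gamma_k$ and manufacture strategies on $G[B]$.

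The central device is to simulate, for every coloring of $B$, a single coloring of $A$ that is ``bad for $A$''. Fix once and for all a subset $\Gamma'\subseteq\Gamma_k$ with $|\Gamma'|=\mu_s(G[A])+1$. Given any coloring $x_B$ of $B$, substituting $x_B$ into the variables indexed by $B$ turns each $F_a$ with $a\in A$ into a strategy $F_a^{(x_B)}$ that depends only on the colors of the neighbours of $a$ lying in $A$; these are exactly legal $s$-guessing strategies on $G[A]$. Since $|\Gamma'|>\mu_s(G[A])$, the Demon wins this induced game over the palette $\Gamma'$: there is a coloring $x_A^{\ast}=x_A^{\ast}(x_B)$ of $A$ using only colors of $\Gamma'$ for which no Bear of $A$ guesses correctly. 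Here I use that each guess $F_a^{(x_B)}(\cdot)$ is an $s$-subset of $\Gamma_k$, that correctness of a guess against a color of $\Gamma'$ depends only on the intersection of the guess with $\Gamma'$, and that this intersection has size at most $s$; thus the induced game really is a genuine $s$-guessing game on $G[A]$ over $\Gamma'$.

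Now I define the $G[B]$-strategy of a Bear $v\in B$. Since $v$ has at most $d$ neighbours in $A$ and every simulated demonic coloring uses only colors of $\Gamma'$, there are at most $(\mu_s(G[A])+1)^d$ relevant colorings $\alpha$ of $N(v)\cap A$. For each such $\alpha$, substituting $\alpha$ into the $A$-neighbour variables of $F_v$ while retaining the $B$-neighbour variables (which $v$ can read off in $G[B]$) produces an $s$-subset of $\Gamma_k$; I let $v$ guess the union of these subsets over all $\alpha$, a set of size at most $s(\mu_s(G[A])+1)^d=s_1$, hence a legal $s_1$-guess. To verify that this family wins on $G[B]$, take an arbitrary coloring $x_B$ and form the coloring $(x_A^{\ast}(x_B),x_B)$ of $G$. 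No Bear of $A$ is correct on it, so by the winning property on $G$ some Bear $v\in B$ guesses $x_B(v)$ correctly, i.e. the $s$-subset produced by $F_v$ from the colors of $N(v)$ in $(x_A^{\ast}(x_B),x_B)$ contains $x_B(v)$. The restriction of $x_A^{\ast}(x_B)$ to $N(v)\cap A$ is one of the colorings $\alpha$ used in $v$'s $G[B]$-strategy, so the corresponding $s$-subset, and hence $v$'s $s_1$-guess, contains $x_B(v)$; thus some Bear of $B$ is correct, as required.

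The step I expect to be the crux is the choice of palette for the simulated coloring of $A$. Guessing the colors of $N(v)\cap A$ over the full set $\Gamma_k$ would only yield the much weaker bound $sk^d$; the stated bound hinges on the observation that the demonic coloring of $A$ may be confined to a fixed set $\Gamma'$ of size merely $\mu_s(G[A])+1$, since no larger palette is needed for the Demon to defeat the induced strategies on $G[A]$. The one point demanding care is precisely the parenthetical verification above, namely that restricting the colors of $A$ to $\Gamma'$ leaves a legitimate $s$-guessing game on $G[A]$, so that $|\Gamma'|>\mu_s(G[A])$ genuinely guarantees a demonic coloring there.
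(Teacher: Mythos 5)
Your proposal is correct and uses the same construction as the paper: restrict the colors on $A$ to a palette of size $\mu_s(G[A])+1$, have each Bear in $B$ take the union of its $G$-guesses over all $\leqslant(\mu_s(G[A])+1)^d$ colorings of its $A$-neighbours from that palette, and exploit a demonic coloring of $A$ for the induced strategies. The only difference is that you argue from the Bears' side (winning strategies on $G$ yield winning $s_1$-strategies on $G[B]$) while the paper argues the contrapositive from the Demon's side (a demonic coloring of $G[B]$ for the union strategies extends to a demonic coloring of $G$); these are the same argument read in opposite directions.
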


\begin{proof}
Let $K>\mu_{s_1}(G[B])$ be the number of colors, let $K_1=\mu_s(G[A])+1$ and take $\Gamma_{K_1}\subset \Gamma_K$. Let $F_i$ be fixed strategies for the graph $G$, $K$ colors and $s$ guesses. We will construct a demonic coloring for the $F_i$, note that we will use only colors from $\Gamma_{K_1}$ to color vertices in $A$.

First let us construct strategies $F_i^B$ for $G[B]$ with $K$ colors and $s_1=sK_1^d$ guesses. Let $v\in B$ and let $y\in\Gamma_K^{|N(v)\cap B|}$ be a coloring of the neighbors of $v$ in $B$. We set $F_v^B(x)$ to be a fixed subset of $\Gamma_K$ of cardinality $sK_1^d$ containing the sets $F_v(x,y)$ for all $x\in\Gamma_{K_1}^{|N(v)\cap A|}$. Such a set can be chosen because $|N(v)\cap A|\leqslant d$.

Since $K>\mu_{s_1}(G[B])$ we can find a demonic coloring $\phi_B$ of $G[B]$ with respect to $F_i^B$.

Now let us construct a strategy $F_i^A$ for $G[A]$ with $K_1$ colors and $s$ guesses. Let $v\in A$ and $x\in\Gamma_{K_1}^{|N(v)\cap A|}$. We set $F_v^A(x)$ to be a fixed subset of $\Gamma_{K_1}$ containing $F_v(x,\phi_B)\cap\Gamma_{K_1}$.

Since $K_1>\mu_s(G[A])$ we can find a demonic coloring $\phi_A$ of $G[A]$ with respect to $F_i^A$.

We claim that $\phi=(\phi_A,\phi_B)$ is a demonic coloring for the $F_i$. Indeed, if $v\in A$ and $\phi(v)\in F_v(\phi)$ then $\phi_A(v)\in F^A_v(\phi_A)$, a contradiction with the choice of $\phi_A$. If $v\in B$ and $\phi(v)\in F_v(\phi)$ then $\phi_B(v)\in F^B_v(\phi_B)$, a contradiction with the choice of $\phi_B$.
\end{proof}

We exhibit the usefulness of Theorem \ref{th_partition} by showing the following two results.

\begin{theorem}
Every graph $G$ of genus $\gamma$ and sufficiently large girth (depending on $\gamma$) satisfies $\mu_s(G)\leqslant(s^2+s)(s^2+s+1)$, in particular $\mu(G)\leqslant 6$. 
\end{theorem}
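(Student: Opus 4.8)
The plan is to deduce the bound from the partition theorem (Theorem~\ref{th_partition}) together with the tree bound (Theorem~\ref{th_Trees-s}). Observe first that the target quantity factors as $(s^2+s)(s^2+s+1)=s_1(s_1+1)$ with $s_1=s(s+1)$, which is exactly the estimate $\mu_{s_1}(\text{forest})\le s_1(s_1+1)$ supplied by Theorem~\ref{th_Trees-s}. So it suffices to exhibit a vertex partition to which Theorem~\ref{th_partition} applies with $G[B]$ a forest and with the parameters yielding $s_1=s(s+1)$. Since for an independent set one checks at once that $\mu_s$ equals $s$ (each Bear guesses a fixed $s$-set blindly, and the Demon can avoid it as soon as there are more than $s$ colors), taking $A$ independent gives $\mu_s(G[A])\le s$, so $s_1=s(\mu_s(G[A])+1)^d\le s(s+1)^d$. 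Hence it is enough to find a partition with $A$ independent, $G[B]$ a forest, and $d=\max_{v\in B}|N(v)\cap A|\le 1$; then $s_1\le s(s+1)$ and, using that $\mu_s$ is non-decreasing in $s$, $\mu_s(G)\le \mu_{s_1}(G[B])\le s(s+1)\bigl(s(s+1)+1\bigr)=(s^2+s)(s^2+s+1)$. The case $s=1$ gives $\mu(G)\le 2\cdot 3=6$. (Theorem~\ref{th_partition} requires $G$ connected; a disconnected $G$ is treated componentwise, since $\mu_s(G)$ is the maximum of $\mu_s$ over the components, and forest components are covered directly by Theorem~\ref{th_Trees-s}.)

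The two conditions ``$A$ independent'' and ``every vertex of $B$ has at most one neighbour in $A$'' are together equivalent to $A$ being a \emph{$2$-packing}, i.e.\ a set whose vertices lie pairwise at distance at least $3$: forbidding an edge inside $A$ rules out distance $1$, and forbidding a common neighbour of two $A$-vertices rules out distance $2$. Adding ``$G[B]$ is a forest'' means this $2$-packing must also be a \emph{feedback vertex set}. Thus the whole theorem reduces to the following purely graph-theoretic statement, which I would isolate as a lemma: for every $\gamma$ there is a $g_0(\gamma)$ such that every graph of genus at most $\gamma$ and girth at least $g_0(\gamma)$ has a feedback vertex set that is a $2$-packing.

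To build such a set I would first pass to the $2$-core $C$ of each component (the deleted degree-$\le 1$ part can be placed in $B$ without creating cycles or a second $A$-neighbour), and then pass to the multigraph $H$ obtained from $C$ by suppressing all degree-$2$ vertices, so that $V(H)$ is the set of branch vertices and each edge of $H$ is a path of $C$ joining two branch vertices. Breaking a feedback \emph{edge} set of $H$ by deleting one internal vertex from each chosen path turns $C$ into a forest; if the deleted vertices are chosen deep inside their paths they are pairwise far apart and far from all branch vertices, hence form a $2$-packing. Because the girth is at least $g_0$, every cycle of $C$ --- equivalently every cycle of $H$, read with multiplicities --- is long, which is precisely what gives the room to place the deleted vertices at pairwise distance $\ge 3$.

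The main obstacle is that a feedback edge set of $H$ may be forced to use \emph{short} paths --- in the extreme, single edges joining two adjacent branch vertices --- on which no safe internal vertex can be placed; such configurations are exactly what makes $\mu$ large for dense graphs like $K_4$, and they are what the girth hypothesis must suppress. The heart of the argument is to show that large girth relative to the genus prevents this from obstructing the construction. Quantitatively, Euler's formula for genus $\gamma$ gives $e(C)\le \tfrac{g}{g-2}\,(|C|-2+2\gamma)$, so $C$ is sparse and its cyclic structure is tightly controlled; any cycle assembled only from short connections would have to be long (at least $g/O(1)$ edges) yet consist entirely of high-degree branch vertices, and I would argue that, once $g$ is large compared with $\gamma$, such a cycle can always be severed at a well-separated vertex --- choosing one well-spread vertex per independent cycle --- without spoiling the $2$-packing property elsewhere. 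Making this simultaneous choice consistent across \emph{all} cycles, so that the deleted vertices stay pairwise at distance $\ge 3$, is the step I expect to demand the most care, and is where the precise dependence $g_0(\gamma)$ enters.
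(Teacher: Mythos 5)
Your reduction is exactly the paper's: apply Theorem~\ref{th_partition} with $A$ a set of pairwise distance~$\geqslant 3$ (so that $A$ is independent, $\mu_s(G[A])=s$, and $d\leqslant 1$), $G[B]$ a forest, and then Theorem~\ref{th_Trees-s} with $s_1=s(s+1)$, giving $s(s+1)\bigl(s(s+1)+1\bigr)=(s^2+s)(s^2+s+1)$. The structural statement you isolate as a lemma --- every graph of genus $\gamma$ and girth at least $g_0(\gamma)$ admits a feedback vertex set that is a $2$-packing --- is precisely what the paper invokes, except that the paper does not prove it: it cites it as folklore via Lemma~5.1 of Albertson, Chappell, Kierstead, K\"undgen and Ramamurthi (reference \cite{ACKKR}), where the partition is phrased as ``$A$ two-independent and $G[B]$ a tree.''

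The gap is in your attempted proof of that lemma, and it is a genuine one. Your plan --- suppress degree-$2$ vertices of the $2$-core to get a branch multigraph $H$, pick a feedback edge set of $H$, and delete one vertex deep inside each corresponding path --- founders exactly where you say it does: large girth bounds the length of \emph{cycles}, not of individual branch-to-branch paths, so a cycle of length $g$ passing through $g$ branch vertices joined by single edges is perfectly consistent with girth $g$, and then no path on that cycle has a safe internal vertex. Ruling this out requires the Euler-formula sparsity argument in earnest (roughly: the $2$-core has at most $n\cdot\frac{g}{g-2}+O(\gamma)$ edges, hence $O\bigl(\frac{n}{g}+\gamma\bigr)$ branch vertices and independent cycles, so ``most'' of the graph consists of long suppressed paths and one must show the exceptional short-path structure can still be broken by well-separated vertices), together with the consistency argument you defer. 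None of this is carried out, so as written the proposal establishes the deduction but not the decomposition it rests on. The fix is either to complete that argument or to do what the paper does and cite \cite{ACKKR}.
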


\begin{proof}
According to \cite{ACKKR}, Lemma $5.1$ it is folklore that for every surface $S$ there is a girth $\gamma$ such that every graph $G$ of girth at least $\gamma$ embedded in $S$ has a partition $V(G)=A\cup B$ such that $G[B]$ is a tree and $A$ is two independent in $G$. Since $A$ is two independent we have $|N(v)\cap A|\leqslant 1$ for all $v\in B$ and $\mu_s(G[A])=s$. Thus from Theorem \ref{th_partition} and Theorem \ref{th_Trees-s} we obtain $\mu_s(G)\leqslant\mu_{s(s+1)}(G[B])\leqslant s(s+1)(s(s+1)+1)$.
\end{proof}

\begin{theorem}
	Every graph $G$ has a subdivision $S$ satisfying $\mu_s(S)\leqslant s(s+1)^2$, in particular $\mu(S)\leqslant 4$.
\end{theorem}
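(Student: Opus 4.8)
The plan is to apply the partition theorem (Theorem~\ref{th_partition}) to a conveniently chosen subdivision. Let $S$ be the subdivision of $G$ obtained by subdividing every edge exactly once; that is, replace each edge $uv$ of $G$ by a path $u-w_{uv}-v$ through a new vertex $w_{uv}$. I would set $A=V(G)$ (the original vertices) and $B=\{w_{uv}\ :\ uv\in E(G)\}$ (the subdivision vertices), so that $V(S)=A\cup B$.

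First I would record the structural features of this partition. Since every original edge has been subdivided, no two vertices of $A$ are adjacent in $S$, so $S[A]$ is an independent set; likewise two subdivision vertices are never adjacent, so $S[B]$ is an independent set. Moreover each $w_{uv}\in B$ is adjacent precisely to $u$ and $v$, both lying in $A$, so $d=\max\{|N(v)\cap A|\ :\ v\in B\}=2$. Thus the one-subdivision turns $G$ into a bipartite graph whose two independent sides are exactly the original and the new vertices.

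Next I would evaluate the hat parameters of the two sides via a small auxiliary observation: for an edgeless graph $H$ each Bear sees nothing and can do no better than committing to a fixed set of $s$ colors; with $s$ colors the Bears guess all of them and always win, while with $s+1$ colors the Demon colors each vertex outside its (size-$s$) guessed set. Hence $\mu_s(H)=s$ for every independent set $H$; in particular $\mu_s(S[A])=s$, and $\mu_{s_1}(S[B])=s_1$ for every $s_1$.

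Finally I would feed this into Theorem~\ref{th_partition}, applied to $S$ with the partition $A\cup B$ (assuming $G$, and hence $S$, is connected; the general case follows by treating components separately, since $\mu_s$ of a disjoint union is the maximum of $\mu_s$ over the components). With $\mu_s(S[A])=s$ and $d=2$ the theorem gives $s_1=s(\mu_s(S[A])+1)^d=s(s+1)^2$, whence
\[
\mu_s(S)\leqslant \mu_{s_1}(S[B])=s_1=s(s+1)^2 ,
\]
and specializing to $s=1$ yields $\mu(S)\leqslant 1\cdot 2^2=4$. I do not expect a genuine obstacle: the only real idea is the choice of subdivision, which makes both sides independent and forces $d=2$, so that both $\mu_s(S[A])$ and $\mu_{s_1}(S[B])$ are immediate; the remaining points needing care are merely the elementary computation $\mu_s(\text{independent set})=s$ and the reduction of the disconnected case to the connected one so that Theorem~\ref{th_partition} applies.
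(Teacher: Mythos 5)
Your proposal is correct and follows essentially the same route as the paper: subdivide every edge once, take $A=V(G)$ and $B$ the subdivision vertices, note $d=2$ and $\mu_s(S[A])=s$, and apply Theorem~\ref{th_partition} together with $\mu_{s(s+1)^2}(S[B])=s(s+1)^2$ for the independent set $S[B]$. The extra details you supply (the computation $\mu_s(\text{independent set})=s$ and the reduction to connected components) are left implicit in the paper but are exactly the right justifications.
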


\begin{proof}
We construct $S$ by subdividing each edge. Note that $S$ is a bipartite graph with $V(S)=A\cup B$, where $A=V(G)$ and $B$ is the set of vertices introduced in the subdivision. Obviously $|N(v)\cap A|=2$ for all $v\in B$ and $\mu_s(G[A])=s$. Thus by Theorem \ref{th_partition} we have $\mu_s(S)\leqslant\mu_{s(s+1)^2}(S[B])=s(s+1)^2$.
\end{proof}

\subsection{Restricted demonic colorings}

Another way of modifying the hat chromatic number is by restricting the set of allowable strategies that Bears can use. Let $\mathcal{M}$ be a family of strategies specified by some property, for instance. By $\mu_{\mathcal{M}}(G)$ we denote the maximum size of color set $\Gamma$ for which Bears can win with using only strategies from $\mathcal{M}$.

In our next result we give a bound for $\mu_{\mathcal{B}}(G)$ for bounded density graphs, where $\mathcal{B}$ is a family of \emph{bi-polar} strategies defined as follows.

\begin{definition}
We call a strategy $F_i$ \emph{bi-polar with respect to an order} if for all $j\geqslant i$ and all partial colorings $(x_1,\ldots,x_{j-1})\in \Gamma_K^{j-1}$ we have: for all $x_j\in \Gamma_K$ the set $F_i(\{(x_1,\ldots,x_{j-1},x_j)\}\times \Gamma_K^{n-j})$ is either equal $\Gamma_K$ or is a singleton, moreover for any $y\in \Gamma_K$ there may by at most one $x_j\in \Gamma_K$ such that $F_i(\{(x_1,\ldots,x_{j-1},x_j)\}\times \Gamma_K^{n-j})=\{y\}$.

We call a strategy \emph{bi-polar} if it is bi-polar with respect to all orders.
\end{definition}

The ``sum modulo $K$'' strategy from Theorem \ref{th_clique} is an example of a strategy that is bi-polar with respect to all orders.  A strategy for a Bear $v$: ``if any of your neighbors has a red hat, then answer ``red'', otherwise say the color of the hat on Bear $w$'' is bi-polar with respect to those orders for which $w$ is the last of the neighbors of $v$.

Recall that a \emph{coloring number} of a graph $G$, denoted by $\col (G)$, is the least integer $k$ for which there is a linear order of the vertices of $G$ such that each vertex has at most $k-1$ neighbors appearing earlier in the order.

\begin{theorem}
	Every graph $G$ satisfies $\mu_{\mathcal{B}}(G)\leqslant \col (G)$.
\end{theorem}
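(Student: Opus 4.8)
The plan is to show that with $K = \col(G)+1$ colors the adversary can always defeat any bi-polar strategies, which is exactly the statement $\mu_{\mathcal B}(G)\leqslant \col(G)$. First I would fix a linear order $v_1,\dots,v_n$ of $V(G)$ witnessing $\col(G)=k$, so that each vertex has at most $k-1$ neighbors preceding it, and relabel the vertices so that this order is $1,2,\dots,n$. Since the strategies are bi-polar with respect to \emph{all} orders, they are in particular bi-polar with respect to this coloring-number order, and it is precisely this order that I will exploit.

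I would then construct a demonic coloring greedily, fixing $x_1,x_2,\dots,x_n$ one coordinate at a time. For a vertex $v$ let $\ell(v)$ denote the largest index among its neighbors (with $\ell(v)=0$ if $v$ is isolated). The idea is to \emph{block} each vertex $v$ — that is, guarantee $F_v(x)\neq x_v$ — at time $t(v):=\max(v,\ell(v))$, the moment at which both $x_v$ and all neighbor-colors of $v$ have been fixed, so that $F_v$ is already fully determined and cannot change afterwards. There are two kinds of blocking. If $\ell(v)\leqslant v$ (all neighbors precede $v$, or $v$ is isolated), then $F_v$ is already a single fixed color when we choose $x_v$, and I simply pick $x_v$ different from it, forbidding one color. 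If $\ell(v)>v$, then $v$ must be blocked when its last neighbor $\ell(v)$ is colored: here I invoke the bi-polar condition with $i=v$ and $j=\ell(v)>v$. Because $\ell(v)$ is the last neighbor of $v$, fixing $x_{\ell(v)}$ determines $F_v$ completely, so by the ``moreover'' clause of bi-polarity the map sending $x_{\ell(v)}$ to the resulting guess is injective; hence at most one value of $x_{\ell(v)}$ could force $F_v=x_v$, and I forbid that single value.

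The crux is then a counting argument showing that at each time $t$ only few colors are forbidden. The constraints on $x_t$ come from (a) self-blocking of $t$ itself, contributing at most one forbidden color, and (b) blocking of each back-neighbor $v<t$ of $t$ whose last neighbor is $t$, i.e.\ with $\ell(v)=t$, each contributing at most one forbidden color. Since in the coloring-number order $t$ has at most $k-1$ earlier neighbors, there are at most $k-1$ vertices of type (b), so in total at most $1+(k-1)=k=\col(G)$ colors are forbidden for $x_t$. As $K=\col(G)+1>k$, a legal color always remains and the greedy process never gets stuck. Because the neighbors and colors fixed at or before $t(v)$ cannot change later, the blocking of $v$ survives to the end, yielding a demonic coloring; thus the players cannot win with $\col(G)+1$ colors.

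I expect the main obstacle to be pinning down precisely which instance of the bi-polar definition is used and verifying that the accounting is airtight — in particular, matching the definition's quantifier ``$j\geqslant i$'' to the case $\ell(v)>v$, confirming that the determined value of $F_v$ is genuinely frozen by time $t(v)$ so that later coordinate choices cannot resurrect a correct guess, and checking that the at-most-$(k-1)$ back-neighbor bound is applied to the correct vertex at each step. It is also worth stating explicitly that only the injectivity (``moreover'') half of the bi-polar definition is needed for the late blocking, while the dichotomy ``$\Gamma_K$ or singleton'' plays no essential role in this particular argument.
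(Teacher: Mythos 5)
Your proof is correct and follows essentially the same route as the paper's: a greedy construction of a demonic coloring along the $\col$-order, using bi-polarity to forbid at most one color per back-neighbor plus one for the current vertex, hence at most $\col(G)$ colors per step. The only (valid) difference is bookkeeping: the paper blocks every already-colored neighbor at every step via the dichotomy clause, whereas you defer each vertex's blocking to the single step at which its strategy becomes fully determined and correctly observe that only the injectivity (``moreover'') clause is needed there.
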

\begin{proof}
Let $K>\col (G)$ be the number of colors and $F_i$ be the strategies. We will construct a demonic coloring inductively by extending a partial coloring of the first $t$ vertices. 

Let $(x_1,\ldots,x_t)$ be the partial coloring of the first $t$ vertices such that Bears $v_i$ for $i=1,\ldots,t$ either cannot guess yet because $F_i(\{(x_1,\ldots,x_t)\}\times\Gamma_K^{n-t})=\Gamma_K$ or guess incorrectly because $F_i(\{(x_1,\ldots,x_t)\}\times\Gamma_K^{n-t})=\{c\}$ for some color $c\neq x_i$. We will find a suitable color $x_{t+1}$ for the $t+1$-st vertex. If $v_i$ is not an neighbor of $v_{t+1}$ then $F_i$ does not depend on $x_{t+1}$. Let $v_i$ be a neighbor of $v_{t+1}$. Since $F_i$ is bi-polar there is at most one possible color $c_i$ such that $F_i(\{(x_1,\ldots,x_t,c_i)\}\times\Gamma_K^{n-t-1})=\{x_i\}$. Furthermore, $F_{t+1}$ does not depend on the color of $v_{t+1}$ so $F_{t+1}(\{(x_1,\ldots,x_t,x_{t+1})\}\times\Gamma_K^{n-t-1})=\Gamma_K$ or $F_{t+1}(\{(x_1,\ldots,x_{t+1})\}\times\Gamma_K^{n-t-1})=\{c_{t+1}\}$ regardless of $x_{t+1}$. We fix $x_{t+1}$ so that is distinct from $c_i$ for $i\leqslant t$ such that $v_i$ is a neighbor of $v_{t+1}$ and distinct from $\{c_{t+1}\}$. This can be done because $K>\col (G)$.
\end{proof}

Impressed by the theorem above one may wonder whether $\mu(G)\leqslant \col (G)$. In Theorem \ref{th_Skn} we will show a family of graphs $G_k$ with $\col (G)=k+1$ and $\mu(G)\geqslant 2^k$.

\section{Variable color sets}

In this section we consider another variation of our hat guessing game. This time we assume that each Bear $B_i$ has its private set of colors $\Gamma_i$. A strategy $F_i$ is then a function mapping the product $\Gamma_1\times \Gamma_2\times \dots \times \Gamma_n$ into $\Gamma_i$ (depending only on coordinates corresponding to the neighbors of the vertex $i$). A sequence $(a_1,a_2,\dots,a_n)$ is called \emph{winning} if Bears have winning strategies for any sets of colors $\Gamma_i$, with $|\Gamma_i|=a_i$. Otherwise, the sequence $(a_1,a_2,\dots,a_n)$ is called \emph{loosing}.

As a direct corollary from Theorem \ref{Theorem Trees} we have the following:

\begin{theorem}
For every tree $T$, the sequence $(2,3,\dots,3)$ is loosing.
\end{theorem}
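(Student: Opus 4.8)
The plan is to produce, for an arbitrary tree $T$, one concrete choice of colour sets of sizes $(2,3,\dots,3)$ against which Demon wins, and to read off the winning colouring directly from Theorem~\ref{Theorem Trees}. Since a sequence is \emph{loosing} as soon as Bears fail for some admissible assignment, I would root $T$ at the vertex $r$ that is to receive two colours, put $\Gamma_r=\{1,2\}$, and give every other vertex the common three-element set $\Gamma_3=\{1,2,3\}$, so that $\Gamma_r\subset\Gamma_3$. (The outcome depends only on the sizes $|\Gamma_i|$, since each set may be relabelled bijectively, so this particular nested choice is legitimate and convenient.)

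Next I would fix arbitrary strategies $F_i$ for this game and regard them as strategies in the uniform three-colour game on $\Gamma_3$. Because $T$ is loopless, $F_r$ does not read $x_r$, so it is already a genuine function of its neighbours' colours (all in $\Gamma_3$) whose values merely happen to lie in $\{1,2\}\subset\Gamma_3$; in particular $F_r^{-1}(3)=\emptyset$. For the neighbours of $r$, whose strategies do read $x_r\in\{1,2\}$, I would extend the domain to $x_r=3$ arbitrarily, keeping the dependence on neighbours only; this is harmless because the colouring constructed below never colours $r$ with $3$.

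Now I would apply the dominance machinery. By Lemma~\ref{Lemma dominant}, with $k=3$ there is at most one dominant colour for $F_r$ in $\Gamma_3$, so at least one of the two admissible root colours, say $c\in\{1,2\}=\Gamma_r$, is non-dominant. Theorem~\ref{Theorem Trees} then supplies a demonic colouring $f$ of $T$ over $\Gamma_3$ with $f(r)=c$. Since $f(r)=c\in\Gamma_r$ and $f(v)\in\Gamma_3=\Gamma_v$ for every other vertex $v$, the colouring respects the prescribed sets; and since $f(r)\neq 3$, every evaluation $F_i(f)$ queries the strategies only on their original variable domain, so $f$ is demonic for the $F_i$ exactly as in the uniform game. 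Hence Demon wins against every strategy and $(2,3,\dots,3)$ is loosing.

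The statement is essentially a repackaging of Theorem~\ref{Theorem Trees}, so I anticipate no serious obstacle; the one point that needs care is the bookkeeping in the middle step — verifying that viewing the restricted-root strategies as uniform three-colour strategies, together with the harmless extension at $x_r=3$, changes neither the dominance count for $F_r$ nor the demonic status of $f$. Both are immediate once one notes that $F_r^{-1}(3)=\emptyset$ (so colour $3$ is automatically non-dominant and the count is unaffected) and that $f(r)\neq 3$ (so the extension is never evaluated).
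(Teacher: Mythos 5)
Your argument is correct and is precisely the derivation the paper intends: it states this theorem as a direct corollary of Theorem~\ref{Theorem Trees} without further proof, and your write-up supplies exactly the expected details (pick the non-dominant colour among the two admissible root colours via Lemma~\ref{Lemma dominant}, invoke Theorem~\ref{Theorem Trees} to prescribe it at the root). The bookkeeping about extending the neighbours' strategies to $x_r=3$ is careful and harmless, as you note.
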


On the other hand we have:

\begin{theorem}
If $T$ is a tree with degree sequence $(d_1,d_2,\dots,d_n)$, then the sequence $(2^{d_1},2^{d_2},\dots,2^{d_n})$ is winning for $T$.
\end{theorem}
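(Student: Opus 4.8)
The plan is to reduce the problem to a purely combinatorial covering question on the Boolean cube indexed by the edges of $T$, and then to resolve that question by peeling off leaves.

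First I would set up coordinates. Let $E$ denote the edge set of $T$, and for a vertex $i$ let $E_i\subseteq E$ be the set of edges incident to $i$, so that $|E_i|=d_i$. Since $|\Gamma_i|=2^{d_i}$, I fix for each $i$ a bijection between $\Gamma_i$ and the set of functions $E_i\to\{0,1\}$; thus a coloring assigns to each $i$ a value that I record as bits $c_i(e)\in\{0,1\}$, one for every edge $e\in E_i$. For each edge $e=\{i,j\}$ define the parity $s_e=c_i(e)+c_j(e)\pmod{2}$, so that every coloring yields a vector $s=(s_e)_{e\in E}\in\{0,1\}^E$. The key point, which I would emphasize, is that the neighbor's bit $c_j(e)$ is visible to Bear $i$ across the edge $e$.

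Second, I would describe the strategies and reduce winning to covering. Fix, for each vertex $i$, a target vector $t^{(i)}\in\{0,1\}^{E_i}$, to be chosen later. Bear $i$ plays as follows: for each incident edge $e=\{i,j\}$ it reads its neighbor's bit $c_j(e)$ and guesses $c_i(e)=t^{(i)}_e+c_j(e)\pmod{2}$; these guesses assemble into a single guessed color in $\Gamma_i$ and depend only on neighbors, so this is a legal strategy. By construction Bear $i$ guesses correctly precisely when $s_e=t^{(i)}_e$ for every $e\in E_i$, that is, when $s|_{E_i}=t^{(i)}$. Consequently the Bears win with these strategies as soon as the subcubes $\{\,s\in\{0,1\}^E:s|_{E_i}=t^{(i)}\,\}$, over all $i$, cover $\{0,1\}^E$, since the vector $s$ produced by any coloring lies in that cube. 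Hence it suffices to choose the targets $t^{(i)}$ so that these subcubes cover the whole cube, and this is what I would prove by induction on $n$. For $n=1$ the cube is a single point and the claim is trivial. For the inductive step pick a leaf $\ell$ with neighbor $p$ and incident edge $e_0$, and let $T'=T-\ell$, a tree in which $p$ has degree $d_p-1$. By the inductive hypothesis choose targets $t'^{(i)}$ covering $\{0,1\}^{E'}$, where $E'=E\setminus\{e_0\}$. I then lift them to $T$ by setting $t^{(\ell)}_{e_0}=0$; setting $t^{(p)}_{e_0}=1$ together with $t^{(p)}|_{E'\cap E_p}=t'^{(p)}$; and keeping $t^{(i)}=t'^{(i)}$ for every other vertex. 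Any $s$ with $s_{e_0}=0$ is then covered by $\ell$, while any $s$ with $s_{e_0}=1$ restricts to some $s'\in\{0,1\}^{E'}$ covered by a vertex of $T'$, whose lifted target covers $s$ (for $i=p$ this is exactly where the choice $t^{(p)}_{e_0}=1$ is used). This completes the induction and the proof.

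The genuinely new idea, and the step I would expect to need the most care in writing up, is the reduction carried out in the first two paragraphs: recognizing that a Bear of degree $d_i$ facing $2^{d_i}$ colors can split its single guess into $d_i$ independent parity guesses, one per incident edge, precisely because it sees the neighbor's matching bit. Once winning is rephrased as covering $\{0,1\}^E$ by the codimension-$d_i$ subcubes, the leaf-peeling induction is routine, the only thing to verify being the degree bookkeeping when a leaf is removed. I would also remark that the necessary volume inequality $\sum_i 2^{-d_i}\ge 1$ holds automatically for trees because $\sum_i d_i=2(n-1)$, which is consistent with (though much weaker than) the explicit cover we construct; indeed, since the edge-sum map is onto $\{0,1\}^E$, the covering formulation is in fact tight rather than merely sufficient.
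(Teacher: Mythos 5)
Your proof is correct. It rests on the same underlying mechanism as the paper's --- a leaf-peeling induction in which the edge joining the removed leaf to its neighbour carries one parity bit that the two endpoints split between themselves --- but you package it quite differently. The paper works directly with strategies: it removes a leaf $v$ with neighbour $w$, regards the $2^{d}$ colours of $w$ as $2^{d-1}$ colours each split into an odd and an even copy (the neighbours of $w$ play the $T'$ strategy on $\lceil c(w)/2\rceil$), and lets $v$ and $w$ divide the two parities of the pair $(c(v),c(w))$ so that exactly one of them converts a correct $T'$-guess at $w$ into a correct guess on $T$. You instead perform the whole reduction up front: identifying each colour of vertex $i$ with a bit vector indexed by its incident edges, fixing XOR strategies determined by target vectors $t^{(i)}$, and observing that Bear $i$ wins exactly when the edge-parity vector $s$ lies in the codimension-$d_i$ subcube $\{s\colon s|_{E_i}=t^{(i)}\}$. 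The game then becomes the purely combinatorial question of covering $\{0,1\}^{E}$ by these subcubes, which your leaf-peeling induction settles. What your route buys is transparency: the strategies are explicit and uniform, the role of the value $2^{d_i}$ is visible in the volume bound $\sum_i 2^{-d_i}\geqslant 1$, and (since the edge-parity map is onto $\{0,1\}^E$) the covering reformulation is an equivalence for this class of strategies rather than a one-way sufficient condition. What the paper's route buys is brevity and the absence of any encoding overhead. Both inductions carry the same content; yours is the more structural and arguably the more reusable of the two.
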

\begin{proof}
We prove the theorem by induction on $n$. Assume that $v$ is a leaf of $T$ and $w$ a neighbor of $v$ with $\deg(w)=d$. Consider $T'=T-v$. Let $\Gamma_w(T')=\{1,\ldots, 2^{d-1}\}$ and $\Gamma_w(T)=\{1,\ldots, 2^d\}$ be the set of colors of $w$ for $T'$ and $T$, respectively. Let $\Gamma_v(T)=\{1,2\}$. By the induction hypothesis the Bears have winning strategies for $T'$, we modify them to strategies for $T$ as follows:
\begin{itemize}
\item the Bears on $T'$ that are not neighbors of $w$ apply for $T$ the same strategy as for $T'$, 
\item the Bears on $T'$ that are neighbors of $w$ pretend that $w$ is colored with $\lceil c(w)/2\rceil$ instead of $c(w)$ and use the strategy from $T'$,
\item the Bear $v$ answers $2$ if $c(w)$ is odd and $1$ if $c(w)$ is even, 
\item the Bear $w$ obtains an answer $d$ from his strategy for $T'$ and answers $2(d-1)+c(v)$.
\end{itemize} 

To check that the strategy is winning assume that $c$ is a coloring of $T$. It induces a coloring of $T'$ and at least one of the Bears guesses correctly on $T'$. If it is not $w$ then that Bear also guesses correctly on $T$. If it is $w$ then it means that $d=\lceil c(w)/2\rceil$. That means that $w$ answers correctly if $v$ and $w$ have the same parity and $v$ answers correctly if $v$ and $w$ have distinct parity.
\end{proof}

We would also like to cite a result by Szczechla (\cite{Szcz}, Corollary 8):

\begin{theorem}
	For every cycle $C_n$, the sequence $(4,3,\dots,3)$ is loosing. In consequence, $\mu(C_n)\leqslant 3$.
\end{theorem}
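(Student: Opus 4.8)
The second assertion is a consequence of the first together with a restriction argument, so I would first record that reduction and then concentrate on proving that $(4,3,\dots,3)$ is loosing. Suppose $\mu(C_n)\geqslant 4$, i.e.\ that Bears win with four uniform colors: there are strategies $F_i$ on $\Gamma^n$ with $|\Gamma|=4$ admitting no demonic coloring. Given any instance with $|\Gamma_1|=4$ and $|\Gamma_i|=3$ for $i\geqslant 2$, relabel so that $\Gamma_1=\Gamma$ and $\Gamma_i\subseteq\Gamma$, and restrict each $F_i$ to $\Gamma_1\times\cdots\times\Gamma_n$, replacing any guess lying outside $\Gamma_i$ by an arbitrary element of $\Gamma_i$. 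A \emph{correct} guess of Bear $i$ automatically lies in $\Gamma_i$, so this clamping never destroys a correct guess; since every coloring in $\Gamma_1\times\cdots\times\Gamma_n\subseteq\Gamma^n$ already has one, Bears win this instance too. Hence $(4,3,\dots,3)$ would be winning, contradicting the first assertion; so the first assertion forces $\mu(C_n)\leqslant 3$.

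For the main claim I would take all three-element sets equal to a fixed $\Gamma_3$ and $\Gamma_1\supseteq\Gamma_3$ of size $4$, and reduce the cycle to a tree by cutting the edge $\{n,1\}$. Fixing the two \emph{seam} colors $x_1=a\in\Gamma_1$ and $x_n=b\in\Gamma_3$ turns the interior path $2-3-\cdots-(n-1)$ into a genuine tree problem: the induced strategies $x_3\mapsto F_2(a,x_3)$, $x_{n-2}\mapsto F_{n-1}(x_{n-2},b)$ and the unchanged $F_3,\dots,F_{n-2}$ depend only on path-neighbors, so they are valid strategies on this path. The two remaining Bears become endpoint conditions: Bear $1$ is wrong iff $F_1(b,x_2)\neq a$, and Bear $n$ is wrong iff $F_n(x_{n-1},a)\neq b$. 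Thus it suffices to choose a seam $(a,b)$ and a demonic coloring of the interior path meeting these two avoidance conditions at its ends. (Small cases, such as $C_3=K_3$ and $C_4$, I would dispose of directly.)

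Here the fourth color at vertex $1$ does the decisive work. For fixed $b$ the map $x_2\mapsto F_1(b,x_2)$ sends the three-element set $\Gamma_3$ into the four-element set $\Gamma_1$, so its image misses some color $a^*=a^*(b)$; choosing $a=a^*(b)$ makes Bear $1$ wrong for \emph{every} value of $x_2$ and erases the first endpoint condition altogether. Rooting the interior path at $n-1$ and applying Theorem~\ref{Theorem Trees} with $k=3$, every non-dominant root color extends to a demonic coloring of the path with $x_{n-1}$ equal to that color; by Lemma~\ref{Lemma dominant} at most one color is dominant, so at least two root colors are available. It then remains to find a seam color $b$ for which one of these admissible root colors $c$ also satisfies the surviving condition $F_n(c,a^*(b))\neq b$.

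I expect this last step to be the main obstacle. An adversary might arrange $F_n(\,\cdot\,,a^*(b))\equiv b$ for every candidate $b$, so that no admissible root color $c$ satisfies $F_n(c,a^*(b))\neq b$ for any seam, defeating both admissible colors at once; ruling this out requires exploiting the remaining freedom, namely the three choices of $b\in\Gamma_3$ together with the extra room in $a^*(b)$ whenever $F_1(b,\cdot)$ fails to be injective, through a short case analysis. This seam-closure is exactly the delicate combinatorial point, and it is the part worked out in Szczechla's (involved) argument.
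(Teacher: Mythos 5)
This theorem is not proved in the paper at all: it is quoted verbatim from Szczechla (\cite{Szcz}, Corollary 8), and the introduction explicitly warns that the proof there ``is quite involved.'' So there is no in-paper argument to compare against; what matters is whether your proposal stands on its own, and it does not. Your first paragraph (the reduction from $\mu(C_n)\leqslant 3$ to the loosingness of $(4,3,\dots,3)$ by clamping guesses into the smaller color sets) is correct, and your cut-the-cycle setup is sensible: fixing the seam colors $x_1=a$, $x_n=b$ does turn the interior into a path problem to which Theorem~\ref{Theorem Trees} and Lemma~\ref{Lemma dominant} apply, and using the fourth color at vertex $1$ to pick $a=a^*(b)$ outside the image of $F_1(b,\cdot)$ is a genuine gain. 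But the argument stops exactly where the theorem actually lives.

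The unresolved step is not a routine ``short case analysis.'' After your reductions you must find $b\in\Gamma_3$ and an admissible (non-dominant) root color $c$ for the path with $F_n(c,a^*(b))\neq b$; as you yourself observe, the Bears can arrange $F_1$ so that $b\mapsto a^*(b)$ is injective (three distinct missed colors inside the four-element set $\Gamma_1$) and then set $F_n(\cdot,a^*(b))\equiv b$ for each $b$, which defeats every seam of the restricted form you allow. Escaping this requires abandoning the simplification that Bear $1$ must be wrong for \emph{every} $x_2$, i.e.\ re-coupling the choice of $a$ to the actual value of $x_2$ produced by the path coloring -- and at that point the two endpoint conditions interact and the analysis becomes the delicate part of Szczechla's proof (note that for $3\mid n$ the Bears genuinely win with three uniform colors, so any correct argument must be sensitive enough not to ``prove'' too much). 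Deferring this step to the cited reference means the proposal is a partial reduction, not a proof; as written it establishes only the (correct) implication structure and the easy half of the construction.
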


Now let us make a slight generalization of the game. After choosing a graph $G$ and the number $K$ of colors let us also choose a set $A\subset \Gamma_K^n$ of \emph{admissible colorings}. The Bears know the set $A$ before determining their strategy and the Demon must choose a coloring from $A$. Obviously for $A=\Gamma_K^n$ we obtain the standard game. Moreover, if $A\subset\Gamma_K^i\times\{c\}\times\Gamma_K^{n-i-1}$ then $F_i=c$ is a winning strategy.

By $\mu_a(G)$ we denote the largest integer $t$ such that for all $K$ and for all subsets $A\subset\Gamma_K^n$ of cardinality at most $t$ the Bears have a winning strategy in the game with the set of admissible colorings $A$. Obviously $\mu_a(G)<(\mu(G)+1)^n$.

The usefulness of admissible colorings is presented in Theorem \ref{th_Skn}, before proving it we need the following:

\begin{lemma}\label{lem_Kn_a}
$\mu_a(K_n)\geqslant 2^n-1$
\end{lemma}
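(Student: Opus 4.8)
The plan is to reformulate the statement as a covering problem and prove it by induction on $n$. By the definition of $\mu_a$ it suffices to show that for every $K$ and every set of admissible colorings $A\subset\Gamma_K^n$ with $|A|\leqslant 2^n-1$, the Bears on $K_n$ have a strategy guaranteeing that at least one of them guesses correctly on every $a\in A$. Since $K_n$ is complete, each Bear $i$ sees all coordinates except its own, so a strategy $F_i$ covers, for each setting of the remaining coordinates, exactly one coloring, namely the one whose $i$-th coordinate equals the guessed value. The goal is thus to cover all of $A$ by these $n$ ``partial function graphs''.

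First I would set up the induction. The base case $n=1$ is immediate: $|A|\leqslant 1$, and the single Bear (who sees nothing) simply guesses the unique admissible color. For the inductive step, assume the claim for $n-1$ and any number of colors. Given $A\subset\Gamma_K^n$ with $|A|\leqslant 2^n-1$, I slice $A$ according to the last coordinate: for each color $c$ set $A_c=\{a\in A:a_n=c\}$ and let $A_c'\subset\Gamma_K^{n-1}$ be its projection onto the first $n-1$ coordinates. Since $\sum_c |A_c|=|A|\leqslant 2^n-1<2\cdot 2^{n-1}$, at most one color $c^*$ can satisfy $|A_{c^*}|\geqslant 2^{n-1}$, while every other slice has $|A_c|\leqslant 2^{n-1}-1$.

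The strategy then splits the work. I let Bear $n$ always guess $c^*$; since Bear $n$ sees the whole prefix $(a_1,\dots,a_{n-1})$ and every element of $A_{c^*}$ has a distinct prefix, this single guess covers the entire heavy slice $A_{c^*}$ at once. For each remaining color $c\neq c^*$ we have $|A_c'|\leqslant 2^{n-1}-1$, so by the induction hypothesis there is a winning strategy on $K_{n-1}$ for the admissible set $A_c'$. Bears $1,\dots,n-1$ implement all of these simultaneously: each such Bear also sees the coordinate $a_n$ in $K_n$, so upon observing $a_n=c\neq c^*$ it applies the $K_{n-1}$-strategy associated with $A_c'$ (and does anything when $a_n=c^*$). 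Correctness is then a direct check: if $a_n=c^*$ then Bear $n$ is correct, and if $a_n=c\neq c^*$ then the simulated $K_{n-1}$ game on $A_c'$ guarantees that one of the first $n-1$ Bears is correct.

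The main point to get right is the counting observation that at most one slice can be ``heavy'', which is exactly where the bound $2^n-1$ (rather than $2^n$) is used; this is what lets one extra Bear absorb the overflow while the rest reduce cleanly to the $(n-1)$-dimensional problem. A secondary point to verify is that the lower Bears legitimately have enough information to run the inductive strategies, but this is free: in $K_n$ each of them sees strictly more than in $K_{n-1}$, namely the extra coordinate $a_n$ on which the case distinction is made.
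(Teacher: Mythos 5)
Your proof is correct, but it decomposes the problem differently from the paper. The paper projects $A$ onto the first $n-1$ coordinates and splits the \emph{prefixes} into those with a unique admissible completion and those with more than one: Bear $n$, who sees the whole prefix, handles the unique-completion prefixes by looking up the forced last color, while Bears $1,\dots,n-1$ ignore Bear $n$ entirely and run a single inductive strategy for the set $B$ of ambiguous prefixes, with $|A|\geqslant 2|B|$ giving $|B|\leqslant 2^{n-1}-1$. You instead slice $A$ by the \emph{last coordinate}, observe by pigeonhole that at most one slice can have size $\geqslant 2^{n-1}$, give Bear $n$ the constant guess $c^*$ to absorb that heavy slice, and have Bears $1,\dots,n-1$ branch on the observed value of $a_n$ to run one inductive strategy per light slice. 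Both arguments extract the factor of two from the bound $2^n-1$ in the same place (the threshold $2^{n-1}-1$ for the inductive call), and both correctly use the fact that in $K_n$ the lower Bears see strictly more than in $K_{n-1}$. The paper's version invokes the induction hypothesis once and concentrates the case analysis in Bear $n$'s strategy; yours invokes it up to $K-1$ times (once per non-heavy color) but makes Bear $n$'s role trivial and the correctness check a clean two-case split. One small point worth stating explicitly in your write-up: if no slice is heavy, $c^*$ should simply be chosen arbitrarily, and for colors $c$ with $A_c=\emptyset$ the inductive call is vacuous; neither affects correctness.
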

\begin{proof}
We prove the Lemma by induction on $n$. For $n=1$ we have one Bear and one admissible coloring.

Now assume $\mu_a(K_{n-1})\geqslant 2^{n-1}-1$ and let $A$ be the set of admissible colorings on $K_n$ with $|A|\leqslant 2^n-1$. Let $\pi:\Gamma_K^n\rightarrow\Gamma_K^{n-1}$ be the projection on the first $n-1$ coordinates and let $B=\{b\in\Gamma_K^{n-1}\ :\ |\pi^{-1}(b)\cap A|>1\}$. Note that $|A|\geqslant 2|B|$, in particular $|B|\leqslant 2^{n-1}-1$. Moreover, for every $c\in\pi(A)\setminus B$ there is a unique $f(c)\in\Gamma_K$ such that $(c,f(c))\in A$.

We define the following strategy: Bears $1,\ldots,n-1$ play on $K_{n-1}$ disregarding the color of the hat on Bear $n$, they use a winning strategy for the set of admissible colorings $B$. Bear $n$ uses the strategy $F_n$ such that $F_n(c)=f(c)$ if $c\in\pi(A)\setminus B$ and $F_n(c)=1$ otherwise.

Observe that the strategy defined above is a winning strategy for the Bears. Indeed, let $d\in A$ be a coloring. If $\pi(d)\in B$ then one of the Bears $1,\ldots,n-1$ guesses correctly. If $\pi(d)\notin B$ then Bear $n$ guesses correctly.
\end{proof}

Let $S_{k,n}$ denote the $k-star$, i.e., the graph obtained by replacing the degree $n$ vertex of $K_{1,n}$ by a clique on $k$ vertices.

\begin{theorem}\label{th_Skn}
$\mu(S_{k,n})\geqslant\mu_a(K_k)+1\geqslant 2^k$ for $n$ large enough.
\end{theorem}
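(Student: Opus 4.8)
The plan is to use $K=t+1$ colors, where $t=\mu_a(K_k)$, and to combine a perfect-hashing strategy on the leaves with the admissible-coloring strategy on the clique that is guaranteed by the definition of $\mu_a$. Write $A$ for the clique on $k$ vertices and $B$ for the set of $n$ leaves, each of which is adjacent to all of $A$ and to no other leaf; so $\col(S_{k,n})=k+1$, a leaf sees only the clique coloring $\mathbf{x}\in\Gamma_K^k$, and every clique Bear sees the remaining clique colors together with the entire leaf coloring $\mathbf{y}\in\Gamma_K^n$. Since $\mu_a(K_k)\ge 2^k-1$ by Lemma~\ref{lem_Kn_a}, it suffices to prove $\mu(S_{k,n})\ge t+1$.

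First I would fix the leaf strategies. I assign to each leaf $\ell$ a function $g_\ell\colon\Gamma_K^k\to\Gamma_K$, the leaf guessing $g_\ell(\mathbf{x})$ from the clique coloring it sees. Call a clique coloring $\mathbf{x}$ a \emph{survivor} of a leaf coloring $\mathbf{y}$ if $g_\ell(\mathbf{x})\neq y_\ell$ for every $\ell$, i.e.\ no leaf guesses its own color correctly, and set $A(\mathbf{y})=\{\mathbf{x}:\mathbf{x}\text{ survives }\mathbf{y}\}$. The crucial requirement is that $|A(\mathbf{y})|\le t$ for every $\mathbf{y}$. I would obtain this by choosing $\{g_\ell\}$ to be a \emph{perfect hash family}: for every set of $t+1=K$ distinct clique colorings there is a leaf $\ell$ on which $g_\ell$ is injective, hence (domain and range of the restriction both having size $K$) a bijection onto $\Gamma_K$. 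If $t+1$ colorings all survived some $\mathbf{y}$, such a leaf would take all $K$ colors on them, so $y_\ell$ would coincide with $g_\ell$ of one of them, contradicting survival; thus $|A(\mathbf{y})|\le t$.

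The existence of such a family for $n$ large enough is exactly where the hypothesis ``$n$ large enough'' is used, and is the main obstacle. By the remark $\mu_a(G)<(\mu(G)+1)^n$ together with $\mu(K_k)=k$ (Theorem~\ref{th_clique}) we have $t<(k+1)^k$, so the domain size $(t+1)^k$ and range size $t+1$ depend only on $k$. A standard probabilistic argument then applies: a uniformly random $g_\ell$ hashes a fixed $K$-subset bijectively with probability $K!/K^K>0$, and a union bound over the $\binom{(t+1)^k}{K}$ subsets gives probability less than $1$ of failure once $n$ exceeds some finite threshold $n_0=n_0(k)$. For $n>n_0$ the surplus leaves may use arbitrary strategies, which only adds constraints and hence only shrinks each $A(\mathbf{y})$.

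Finally I would define the clique strategy and verify correctness. Having observed $\mathbf{y}$, the clique Bears compute $A(\mathbf{y})$, which depends only on $\mathbf{y}$ and the fixed $g_\ell$, and, since $|A(\mathbf{y})|\le t=\mu_a(K_k)$, they play a winning strategy for the admissible-coloring game on $K_k$ with $K$ colors and admissible set $A(\mathbf{y})$; this strategy requires only the other clique colors, all of which each clique Bear sees. For any coloring $(\mathbf{x},\mathbf{y})$ either some leaf guesses correctly, or no leaf does, in which case $\mathbf{x}\in A(\mathbf{y})$ and the admissible-coloring strategy forces some clique Bear to guess correctly. Hence the Bears win with $t+1$ colors, giving $\mu(S_{k,n})\ge\mu_a(K_k)+1\ge 2^k$.
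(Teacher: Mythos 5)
Your proof is correct, and its skeleton is the same as the paper's: the leaves' guesses cut the set of clique colorings consistent with ``no leaf guesses right'' down to at most $\mu_a(K_k)$ candidates, all computable from the leaf coloring that every clique Bear sees, and Lemma~\ref{lem_Kn_a} then finishes the game on the clique. Where you differ is in how the leaf strategies are built. The paper takes one leaf for each $(K-1)$-element subset $A'$ of $\Gamma_K^k$, with the explicit strategy ``if the clique coloring is the $j$-th element of $A'$, say $j$; otherwise say $K$'', and bounds $|A|\leqslant K-1$ by observing that if all of $A'$ survived then that leaf's hat must have color $K$, so it would catch every clique coloring outside $A'$; this yields a concrete value $n=\binom{K^k}{K-1}$. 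You instead ask for a perfect hash family and obtain it probabilistically, which costs you an explicit threshold $n_0(k)$ but isolates cleanly the combinatorial property actually needed, with $|A(\mathbf{y})|\leqslant K-1$ following from a pigeonhole step. The two constructions are closer than they look: the paper's family is itself a perfect hash family, since the indexing function of $A'$ restricts to a bijection on $A'\cup\{\mathbf{z}\}$ for any single $\mathbf{z}\notin A'$. A small bonus of your version is that by playing with $K=\mu_a(K_k)+1$ colors you establish the inequality $\mu(S_{k,n})\geqslant\mu_a(K_k)+1$ literally, whereas the paper's proof as written instantiates $K=2^k$ and strictly speaking only exhibits the $\geqslant 2^k$ bound (though it adapts immediately).
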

\begin{proof}
Let $K=2^k$ be the number of colors. Let $B$ be the set $K-1$ element subsets of $\Gamma_K^k$. Take $n=|B|$ and $V(S_{k,n})=V_1\cup V_2$, where $S_{k,n}[V_1]$ is a clique on $k$ vertices and $S_{k,n}[V_2]$ is the empty graph on $n$ vertices. Every Bear in $V_2$ adopts the following strategy: if the coloring of $k$ vertices in $V_1$ belongs to the set the Bear represents, then it says the number of that coloring (in lexicographical order), otherwise the Bear says $K$.

The Bears in $V_1$ compute (each one individually but all with the same data) the set $A$ of possible colorings of $V_1$ for which none of the Bears in $V_2$ answers correctly. Note that $|A|\leqslant K-1$, indeed suppose that $A'\subset A$ is a set of $K-1$ elements. Since the Bear in $V_2$ corresponding to $A'$ does not guess correctly for any of the colorings in $A'$ his hat must have color $K$, so he would guess correctly if $V_1$ were colored with a coloring not in $A'$.

After computing $A$ the Bears in $V_1$ adopt a fixed winning strategy for a game on $K_k=S_{k,n}[V_1]$ with admissible colorings $A$, which is supplied by Lemma \ref{lem_Kn_a}.
\end{proof}

\section{Conjectures}

We conclude the paper with several conjectures:

\begin{conjecture}
	There is a function $F$ such that every graph $G$ satisfies $\mu(G)\leqslant F(\col (G))$.
\end{conjecture}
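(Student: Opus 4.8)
The plan is to exploit the degeneracy reformulation of the coloring number: $\col(G)=d+1$ exactly when $G$ is $d$-degenerate, so there is a linear order $v_1,\dots,v_n$ of the vertices in which every $v_i$ has at most $d$ neighbours among $\{v_1,\dots,v_{i-1}\}$, its \emph{back-neighbours}. The goal would be to show that once the number of colours $k$ exceeds some $F(d)$, the Demon can construct a demonic colouring by building it inductively along this order, in the spirit of the proof that $\mu_{\mathcal{B}}(G)\leqslant\col(G)$. Since that bi-polar theorem already settles the structured case, the entire content of the conjecture lies in removing the bi-polarity hypothesis: one must control \emph{arbitrary} strategies using only the bounded back-degree $d$, rather than the full degree.

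First I would isolate the local obstruction by colouring the vertices from last to first. When we reach $v_i$, all of its forward-neighbours are already coloured, so their guesses are frozen and each contributes at most one forbidden colour at the vertices it sees; meanwhile $v_i$'s own guess $F_i$ still depends on the colours of its at most $d$ uncoloured back-neighbours. Thus $v_i$ together with its back-neighbours forms a small cluster of size at most $d+1$ whose internal guesses are not yet pinned down, while everything outside interacts with the cluster only through already-fixed forward guesses. The hope would be to solve each cluster by a finite local game and, by an analogue of the dominant-colour mechanism used in Lemma~\ref{Lemma dominant}, to show that the information a cluster must transmit to the still-uncoloured part is a forbidden set whose size is bounded in terms of $d$ alone.

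The main obstacle, and the reason this remains a conjecture, is precisely that this transmitted information need not be bounded for arbitrary strategies. A vertex of bounded back-degree may have unboundedly many forward-neighbours, and its guess can be an arbitrary function of all of them, so the clean ``one forbidden colour per back-neighbour'' bookkeeping that powers the bi-polar proof has no known counterpart here. The graphs $S_{k,n}$ of Theorem~\ref{th_Skn} are exactly the manifestation of this difficulty: they have $\col(S_{k,n})=k+1$ yet $\mu(S_{k,n})\geqslant 2^k$, the $k$ clique-vertices leveraging their many forward-leaves to force the Demon through the elaborate family of admissible colourings furnished by Lemma~\ref{lem_Kn_a}. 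In particular any valid $F$ must satisfy $F(c)\geqslant 2^{c-1}$, so the sought-after argument cannot avoid an exponential blow-up; and a Lovász Local Lemma approach is ruled out, because the event ``$v_i$ guesses correctly'' genuinely depends on all neighbours, making the dependency graph the full graph rather than a back-degree-bounded one, which is exactly why the LLL argument yields only the maximum-degree bound $\mu(G)\leqslant e(\Delta+1)$.

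Consequently the decisive step is to replace max-degree dependency control by back-degree control. I would attempt this through a constraint-propagation, or \emph{menu}, induction: process the vertices in reverse degeneracy order and maintain, for the uncoloured prefix, a family of forbidden partial patterns certifying the existence of a completion that defeats every strategy, and prove that passing from $v_{i+1}$ to $v_i$ enlarges this family by a factor depending only on $d$ and $k$, keeping its size bounded. Establishing such a bounded-menu lemma for arbitrary strategies, if it is even true, is where the real work and the real difficulty reside; absent it, one is thrown back on the full-degree dependencies that only the probabilistic bounds can handle.
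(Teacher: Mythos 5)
The statement you are asked about is labelled a \emph{conjecture} in the paper: the authors offer no proof of it, only the observation (via Theorem~\ref{th_Skn}) that any such $F$ must satisfy $F(k)\geqslant 2^{k-1}$. Your proposal likewise does not prove it. What you have written is a candid and largely accurate assessment of why the problem is hard, together with a proof \emph{plan}, but the plan's load-bearing step --- the ``bounded-menu lemma'' asserting that the forbidden information a cluster must transmit forward can be kept bounded in terms of $d$ and $k$ alone for arbitrary strategies --- is exactly the open problem restated in different language, and you explicitly concede both that you cannot establish it and that it might be false. An argument whose decisive step is acknowledged as unproven is not a proof, so there is a genuine gap: namely, the entire content of the conjecture.

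That said, your diagnostic observations are sound and consistent with the paper. You correctly note that the bi-polar result $\mu_{\mathcal{B}}(G)\leqslant\col(G)$ handles only a restricted strategy class, that the Lov\'asz Local Lemma is tied to full degree rather than back-degree, and that the graphs $S_{k,n}$ (with $\col(S_{k,n})=k+1$ and $\mu(S_{k,n})\geqslant 2^k$) force any valid $F$ to grow at least exponentially --- which is precisely the remark the authors make after stating the conjecture. If you wish to contribute something verifiable here, the realistic targets are partial results: either prove the bounded-menu lemma for a restricted class of graphs or strategies (extending the bi-polar theorem), or sharpen the lower bound on $F$ beyond $2^{k-1}$ by a construction refining $S_{k,n}$. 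As it stands, your text should be presented as a discussion of the conjecture, not as a proof attempt.
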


If such $F$ exists then by theorem \ref{th_Skn} we have $F(k)\geq 2^{k-1}$.

\begin{conjecture}
	If $G$ is a graph with degree sequence $(d_1,d_2,\dots,d_n)$, then the sequence $(d_1,d_2,\dots,d_n)$ is winning for $G$.
\end{conjecture}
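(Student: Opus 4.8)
The plan is to first strip away the trivial part of the conjecture and then attack the genuinely hard core. Observe that in the variable-color game a vertex $v$ with $d_v=1$ has a singleton color set $\Gamma_v$, so the Bear at $v$ wins unconditionally by naming its only color; consequently any graph possessing a vertex of degree $1$ has $(d_1,\dots,d_n)$ \emph{winning} for free. One might hope to peel off leaves inductively, but deleting a leaf lowers the degree of its neighbour without shrinking that neighbour's color set, so the hypothesis ``color-set sizes equal the degrees'' is destroyed after a single step. The clean reduction is therefore only that it suffices to treat graphs of minimum degree at least $2$, and, by splitting into components, connected such graphs: if each component is winning, then in any coloring some component already contains a correct guess, so $G$ is winning.

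Before constructing a strategy it is worth recording that there is no counting obstruction. With a fixed deterministic strategy the Bear at $v$ guesses correctly on exactly a $1/d_v$ fraction of the $\prod_u d_u$ colorings, so the total mass of ``caught'' colorings, summed over $v$, equals $(\prod_u d_u)\sum_v 1/d_v$. Since $d_v\le n-1$ for every $v$ we have $\sum_v 1/d_v\ge n/(n-1)>1$, so the caught sets carry enough total measure to cover the whole cube $\prod_v\Gamma_v$; what the conjecture asserts is that this covering can actually be realized by functions of the neighborhoods. Equivalently, in the algebraic reformulation of the game one must exhibit strategy polynomials $P_v$ with $\prod_v\bigl(x_v-P_v\bigr)$ vanishing on $\prod_v\Gamma_v$, where $|\Gamma_v|=d_v$.

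For the construction I would identify $\Gamma_v$ with $\mathbb Z_{d_v}$ and exploit the numerical coincidence $|N(v)|=|\Gamma_v|=d_v$: fix a bijection $\beta_v\colon N(v)\to\mathbb Z_{d_v}$ and try a generalized modular guess $g_v=\sum_{u\in N(v)}f_{u\to v}(c(u))\bmod d_v$, mimicking the sum strategy of Theorem~\ref{th_clique}. A second, more structural route is induction along an orientation: a connected graph with $\delta\ge 2$ is not a forest, hence admits an orientation in which every vertex has out-degree at least $1$; letting each Bear base its guess on its out-neighbours reduces the game to its cyclic core, where the known bounds $\mu(K_n)=n$ and $\mu(C_n)\ge 2$ from~\cite{Szcz} furnish winning play on cliques and cycles. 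Either way the aim is to glue local winning strategies on the blocks of $G$ into a global one.

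The main obstacle is precisely the non-uniformity of the color sets, which defeats the global modular-sum argument that makes $K_n$ easy: when the $d_v$ differ there is no single group in which to add, and the per-vertex residues interact across edges in a way the crude counting above cannot control. The induction is equally delicate, since vertex deletion breaks the ``degree $=$ list size'' balance and turns the clean hypothesis into a variable-color problem with surplus colors that must be tracked — a role analogous to the multiple-guess parameter in Theorems~\ref{th_partition} and~\ref{th_Trees-s}, so a correct proof will likely need a comparable bookkeeping device. I expect the hardest instances to be the graphs all of whose blocks are cliques or cycles, where the slack $\sum_v 1/d_v-1$ is smallest; this is the exact analogue of the Gallai trees that are the extremal obstructions in degree-choosability, and controlling that tight regime is what keeps the statement at the level of a conjecture.
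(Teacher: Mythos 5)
This statement appears in the paper as a conjecture: the authors give no proof, so there is nothing on their side to compare against, and your proposal --- by your own closing admission --- is not a proof either. The parts you do establish are correct but peripheral. A vertex of degree $1$ has a singleton color list and guesses correctly unconditionally, so only graphs of minimum degree at least $2$ matter; a single winning component suffices, so one may assume connectivity; and the first-moment computation $\sum_v 1/d_v \geqslant n/(n-1) > 1$ shows only that the correct-guess sets carry enough total measure to cover the cube $\prod_v \Gamma_v$, not that functions of the neighborhoods realizing such a cover exist. That last gap is precisely the content of the conjecture, and it is the same gap that separates the union bound in the proof of Theorem~\ref{th_clique} from the actual sum-modulo-$n$ strategy there.

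The genuine missing piece is that neither of your two proposed constructions is carried out or verified on a single nontrivial instance. The ``generalized modular guess'' $g_v=\sum_{u\in N(v)} f_{u\to v}(c(u)) \bmod d_v$ is a template with unspecified transfer maps $f_{u\to v}$ and no argument that any choice of them catches every coloring; the mechanism that makes the clique case work --- $n$ Bears partitioning the $n$ possible values of one global invariant $S$ --- has no analogue when the moduli $d_v$ differ, as you yourself note. The orientation/block route likewise supplies no gluing mechanism across cut vertices, which is exactly where the variable-color bookkeeping (the analogue of the multiple-guess parameter in Theorems~\ref{th_Trees-s} and~\ref{th_partition}) would have to live. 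For calibration, the strongest result of this type the paper actually proves is that $(2^{d_1},\dots,2^{d_n})$ is winning, and only for trees --- exponentially weaker than the conjectured $(d_1,\dots,d_n)$ and on a far more restricted class. Your identification of Gallai-tree-like graphs as the tight cases is a sensible heuristic borrowed from degree-choosability, but locating where a proof would be hard is not a proof; the conjecture is exactly as open after your writeup as before it.
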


Note that for $C_5$ the sequence $(d_1+1,\ldots,d_5+1)$ is not winning.

\begin{conjecture}
	Every graph $G$ with maximum degree $\Delta$ satisfies $\mu(G)\leqslant \Delta+1$.
\end{conjecture}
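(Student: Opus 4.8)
The goal is to show that $k=\Delta+2$ colors already suffice for the Demon, i.e.\ that for any strategies $F_1,\dots,F_n$ on a graph $G$ of maximum degree $\Delta$ there exists a demonic coloring. The plan is to start from exactly the probabilistic reformulation used for the bound $\mu(G)\leqslant e(\Delta+1)$: color uniformly at random, let $A_i$ be the event that Bear $i$ guesses correctly, so that $\Pr(A_i)=1/k$ and $A_i$ is mutually independent of every $A_j$ with $j$ non-adjacent to $i$. A demonic coloring exists precisely when $\Pr\left(\bigcap_i \overline{A_i}\right)>0$, that is, when the ``correct-guess'' events fail to cover the cube $\Gamma_k^n$. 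The entire task is therefore to push the local-lemma threshold from $p<\tfrac{1}{e(\Delta+1)}$ down to $p\leqslant\tfrac{1}{\Delta+1}$. As a reassuring base case, the conjecture already holds for $\Delta\leqslant 2$: graphs of maximum degree $1$ are matchings and give $\mu=2$, while for $\Delta=2$ one has paths (trees, $\mu=2$) and cycles with $\mu(C_n)\leqslant 3$ by Szczechla's theorem \cite{Szcz}. An induction on $\Delta$ looks unpromising, however, because deleting edges to lower the degree destroys the prescribed strategies.

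The first natural attempt is to replace the symmetric Lov\'asz Local Lemma by its sharp form, and this is exactly where I expect the main obstacle to surface. The dependency graph of the events $A_i$ is $G$ itself, and Shearer's optimal criterion for a dependency graph of maximum degree $\Delta$ only guarantees non-covering up to $p^{\ast}\sim\frac{(\Delta-1)^{\Delta-1}}{\Delta^{\Delta}}\sim\frac{1}{e\Delta}$. In other words, treating the $A_i$ as black-box events specified only by their probabilities and their dependency graph \emph{cannot} beat the factor $e$; the tightness of Shearer's bound means that any route to $\mu(G)\leqslant\Delta+1$ must genuinely exploit the combinatorial structure of hat events, not merely their probabilities and local independence. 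Recognizing this is, I think, the key strategic point: the probabilistic tools that produced $e(\Delta+1)$ are provably incapable of producing $\Delta+1$.

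Consequently the real plan is to use the structure, and I see two avenues. The first is the algebraic reformulation of Section~3, in which a demonic coloring is a common non-root of the factors $x_i-P_i$ and one asks that the reduced polynomial $\prod_i(x_i-P_i)$ be non-zero over $\Gamma_k^n$; here a Combinatorial Nullstellensatz or Alon--F\"uredi style degree count could in principle deliver a clean $\Delta+1$ threshold, provided one can control the top-degree behavior of each factor in terms of the local degrees. The second is a constructive Demon that colors the vertices in some order while maintaining the invariant that no already-colored Bear is locked into a correct guess. For \emph{bipolar} strategies this succeeds and yields the stronger bound $\mu_{\mathcal B}(G)\leqslant\col(G)$; the obstruction in the general case is that a single neighbor color can make many colors of the current vertex ``dangerous'', so the greedy fixing stalls.

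Bridging this last gap is where I expect the argument to be hardest. Crucially, one cannot retreat to degeneracy-type invariants: Theorem~\ref{th_Skn} exhibits graphs with $\col(G)=k+1$ but $\mu(G)\geqslant 2^k$, so any correct argument must use $\Delta$ itself rather than the coloring number. Because the black-box obstruction above is so rigid and the gap between $\col$ and $\mu$ can be exponential, I would keep open the genuine possibility that the conjecture is false and that the right move is instead to search for $\Delta$-regular (or near-regular) constructions whose hat chromatic number exceeds $\Delta+1$.
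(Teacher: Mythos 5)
This statement is one of the paper's concluding conjectures: the paper offers no proof of it, so there is no argument of the authors' to compare yours against, and the only honest verdict is that your text is not a proof either. What you have actually established is the trivial range $\Delta\leqslant 2$ (matchings, paths, and cycles via \cite{Szcz}); everything beyond that is a survey of candidate strategies, each of which is abandoned at exactly the point where the real work would begin. The algebraic avenue is named but not executed: for the Combinatorial Nullstellensatz to apply to $\prod_i(x_i-P_i)$ one must control the reduced polynomial's top-degree coefficient, and since each $P_i$ may have degree $k-1$ in every neighbor variable, the degree bookkeeping is precisely the obstacle, not a detail. The constructive-Demon avenue is likewise named and then conceded to ``stall'' for non-bipolar strategies, which is an admission that the approach does not go through rather than a step toward making it go through. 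So the gap is total: no mechanism is proposed that actually certifies a demonic coloring with $\Delta+2$ colors.

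That said, your negative observations are correct and worth stating. The point that Shearer's tight threshold $\frac{(\Delta-1)^{\Delta-1}}{\Delta^{\Delta}}\sim\frac{1}{e\Delta}$ for dependency graphs of maximum degree $\Delta$ rules out any black-box improvement of the local-lemma bound $e(\Delta+1)$ down to $\Delta+1$ is a genuine insight: any proof must use the specific structure of the events ``Bear $i$ guesses correctly,'' not merely their probabilities and dependency graph. Your observation that Theorem \ref{th_Skn} (with $\col(G_k)=k+1$ but $\mu(G_k)\geqslant 2^k$) blocks any degeneracy-based argument is also correct, and your closing suggestion that one should seriously entertain the conjecture being false and hunt for near-regular counterexamples is a legitimate research posture for an open problem. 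But identifying why the easy approaches fail, however accurately, is not a proof, and the statement remains exactly what the paper labels it: a conjecture.
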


\begin{conjecture}
	Every planar graph $G$ satisfies $\mu(G)\leqslant 4$.
\end{conjecture}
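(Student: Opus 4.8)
The plan is to avoid the counting bound of Theorem~\ref{thm:ub} entirely. With $h(G)\leqslant 4$ supplied by the Four Color Theorem it yields only $\mu(G)\lesssim n/\bigl(4\ln\tfrac{4}{3}\bigr)$, which is linear in $n$ and hence useless for a constant bound. Instead I would construct a demonic coloring through a layered vertex decomposition of the planar graph and iterate the reduction of Theorem~\ref{th_partition}, using that forests satisfy $\mu_s(F)\leqslant s(s+1)$ by Theorem~\ref{th_Trees-s}. This is the same engine that drove the bounded-genus, large-girth theorem, where a partition $V=A\cup B$ with $G[B]$ a tree, $G[A]$ independent, and cross-degree $d=\max_{v\in B}|N(v)\cap A|\leqslant 1$ collapsed the multiple-guess parameter after a single application.

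Concretely, I would fix a plane embedding and choose a decomposition adapted to it. Since planar graphs have vertex arboricity at most $3$, I would write $V=F_1\cup F_2\cup F_3$ with each $G[F_i]$ a forest and apply Theorem~\ref{th_partition} in stages: once with $A=F_1$ and $B=F_2\cup F_3$, and then recursively inside $B$. At every stage Theorem~\ref{th_Trees-s} controls the forest part, so the only quantity that grows is the guess parameter $s_1=s(\mu_s(G[A])+1)^{d}$.

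The hard part, and the reason the statement is still a conjecture, is the cross-degree $d$. In a general planar graph one vertex of $B$ may be adjacent to arbitrarily many vertices of $A$ (take one side of a large planar bipartite graph), so $d$ is unbounded and the factor $(\mu_s(G[A])+1)^{d}$ destroys the reduction. The large-girth hypothesis is exactly what previously forced $d\leqslant 1$; without it I would need a planar vertex partition into a few forest-like parts in which every vertex sees only boundedly many vertices of the other parts. Producing such a bounded-cross-degree decomposition — for instance through BFS-layering and Baker-type arguments, or via the planar product structure theorem — is the genuine crux, and no tool in this paper delivers it.

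Finally, even a successful bounded-cross-degree decomposition would only yield some large constant, because Theorem~\ref{th_partition} multiplies the guess parameter at each layer. Reaching the tight value $4$, which is forced from below by $\mu(K_4)=4$ together with the planarity of $K_4$, almost certainly cannot come from the layered reduction at all. It would instead require a direct Demon strategy exploiting the embedding — a discharging or potential argument that assigns charge to faces and vertices and propagates an uncolorable configuration, in the spirit of proofs of the Four Color Theorem. I expect that last step to be the real obstacle, and the one separating a qualitative boundedness result from the sharp bound $\mu(G)\leqslant 4$.
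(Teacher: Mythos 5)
This statement is one of the paper's concluding conjectures; the authors give no proof, and your proposal --- quite candidly --- does not give one either. So there is no argument to compare against, and what you have written is a research plan rather than a proof. That said, your assessment of the landscape is accurate on every point I can check: Theorem~\ref{thm:ub} with $h\leqslant 4$ from the Four Color Theorem only gives a bound linear in $n$; the vertex-arboricity-three decomposition of planar graphs does feed naturally into Theorem~\ref{th_partition} combined with Theorem~\ref{th_Trees-s}; and the reduction genuinely breaks on the cross-degree $d$, since already $K_{2,n}$ shows a planar vertex of $B$ can see unboundedly many vertices of $A$, so the factor $(\mu_s(G[A])+1)^d$ is uncontrolled. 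You are also right that the large-girth hypothesis in the bounded-genus theorem exists precisely to force $d\leqslant 1$, and that $\mu(K_4)=4$ makes the conjectured constant tight, so any successful layered reduction would at best give a large explicit constant rather than $4$.

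The one concrete gap worth flagging in your plan, beyond the crux you already name: even a bounded-cross-degree forest decomposition would not suffice as stated, because Theorem~\ref{th_partition} requires you to control $\mu_s(G[A])$ for the part you peel off, and after the first peeling the remaining graph $G[B]$ is a union of two forests with edges between them, not a forest --- so you cannot simply invoke Theorem~\ref{th_Trees-s} ``recursively inside $B$'' without first proving a multiple-guess bound for that two-forest union, which is itself not in the paper. In short: your proposal correctly identifies why the conjecture is open and does not claim to resolve it; as a proof of the statement it has an unfillable gap, which is exactly why the authors list it as a conjecture.
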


\begin{conjecture}
	Every graph $G$ satisfies $\chi(G)\leqslant \mu(G)+1$.
\end{conjecture}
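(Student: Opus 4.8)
Since this is a conjecture rather than a settled result, I will describe an attack and indicate where I expect it to stall. First I would record the (implicitly assumed) monotonicity in the palette: winning with $K$ colours implies winning with any $K'\le K$ colours, since Demon may be restricted to a sub-palette $\Gamma'\subset\Gamma$ and the same strategies still guarantee that some guess hits the actual colour, which lies in $\Gamma'$. Under this reading the conjecture is equivalent to the statement that \emph{every graph with $\chi(G)=h$ admits a winning strategy on $h-1$ colours}. The first reduction would be to vertex-critical graphs: choose a vertex-critical $G'\subseteq G$ with $\chi(G')=\chi(G)=h$; since $\mu(H)\le\mu(G)$ for subgraphs (the inequality already used for trees) and $\chi(G)=\chi(G')$, it suffices to prove $\chi(G')\le\mu(G')+1$. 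The gain is that every vertex-critical graph has $\delta(G')\ge h-1$, and further structural information (2-connectivity, absence of a clique cutset, Gallai's description of the low-connectivity blocks) becomes available, so I may assume $G$ is $h$-critical with $\delta(G)\ge h-1$.

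Next I would try to build the winning strategy from a fixed proper colouring $c\colon V\to\{1,\dots,h\}$ with classes $V_1,\dots,V_h$, and two routes look natural. Route (a), \emph{algebraic}, would generalise the clique strategy of Theorem \ref{th_clique}: take the palette $\mathbb Z_{h-1}$, orient each edge from its smaller to its larger colour class, and let each Bear guess a prescribed $\mathbb Z_{h-1}$-linear combination of the colours it sees, with coefficients dictated by $c$, hoping that a simultaneous ``all wrong'' outcome forces an impossible linear relation, exactly as the pigeonhole on sums does for $K_n$. Route (b), \emph{recursive via helpers}, would imitate the lower-bound mechanism behind Theorem \ref{th_Skn}: peel off the independent class $A=V_h$, set $B=V\setminus V_h$ so that $\chi(G[B])\le h-1$, and use the Bears in $A$ only to \emph{shrink} the set of colourings of $B$ that Demon can realise without any helper guessing correctly, down to an admissible set of controlled size, precisely as the leaves do for the clique through Lemma \ref{lem_Kn_a}. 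One would then want $G[B]$ to win on that admissible set with $h-2$ colours, which suggests proving by induction on $h$ the stronger, admissible-colouring statement bounding $\mu_a(G[B])$ from below in terms of $\chi(G[B])$.

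The main obstacle is intrinsic and common to both routes: a strategy $F_v$ is blind to $f(v)$, so a single guess can certify at most one disequality $f(v)\neq F_v(f)$ per Bear, whereas properness is a \emph{global} constraint demanding $f(u)\neq f(v)$ on \emph{all} edges, of which there may be far more than $n$. Hence one cannot enforce properness edge by edge, and any successful argument must exploit the global interlocking of the constraints — the very feature that makes $\mu$ resist the divide-and-conquer available for upper bounds, where Theorem \ref{th_partition} runs in the wrong direction for lower bounds. In Route (b) this difficulty reappears as the need to match the size of the admissible set that $A$ can impose against the admissible-colouring strength of $B$, and controlling this trade-off through the induction so that the palette shrinks by exactly one per peeled class is where I expect the real work to lie.

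As evidence that $\mu(G)+1$ is the right quantity, the bound is sharp on odd cycles $C_n$ with $n\not\equiv 0\ (\operatorname{mod}3)$, where $\chi=3$ while $\mu=2$ by Szczechla's theorem, and it holds for every class computed in this paper: cliques (where $\chi=\mu=n$), trees, cycles, complete multipartite graphs (where $\chi$ is fixed while $\mu$ grows), and the bounded-genus large-girth family (where $\chi$ stays small and $\mu+1$ is larger still). This consistency, together with the clean reduction to critical graphs, is what makes me believe a proof along Route (b) is the most promising, even though I regard the admissible-set accounting as the genuine crux.
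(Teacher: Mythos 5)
This statement is one of the paper's concluding conjectures: the paper offers no proof of it, so there is nothing to compare your attempt against, and you are right not to claim a proof. What you do assert is sound. The palette monotonicity (winning with $K$ colours implies winning with any $K'\leqslant K$, since a demonic colouring over $\Gamma'\subset\Gamma$ would in particular be a demonic colouring over $\Gamma$) is correct and is indeed implicit in the paper's definition of $\mu$ as a ``largest $k$''. The reduction to vertex-critical graphs via the subgraph monotonicity $\mu(H)\leqslant\mu(G)$ is also valid. Your diagnosis of the obstruction — that each Bear certifies only one disequality while properness imposes one per edge, and that Theorem \ref{th_partition} only yields upper bounds on $\mu$ — matches why the conjecture is nontrivial. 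None of this, however, advances beyond a plan, and Route (a) as stated is only a hope: you give no candidate coefficients and no counting argument replacing the pigeonhole on sums, which is exactly where the clique proof uses that \emph{every} pair of vertices is adjacent.

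One concrete observation you could have extracted from the paper's own citations: the conjecture is trivially true for $\chi(G)\leqslant 3$ (any graph with an edge has $\mu\geqslant\mu(K_2)=2$), and it is already known for $\chi(G)=4$, because the cited result of Kokhas and Latyshev characterizes the connected graphs with $\mu(G)\leqslant 2$ as trees and certain unicyclic graphs, all of which have $\chi\leqslant 3$; hence $\chi(G)\geqslant 4$ forces $\mu(G)\geqslant 3$. The first genuinely open case is therefore $\chi=5$ versus $\mu\geqslant 4$, and your consistency checks (odd cycles $C_n$ with $n\not\equiv 0\pmod 3$ showing the bound is tight, cliques, complete multipartite graphs) are all correct but do not touch that case. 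As a referee I would record your proposal as a reasonable research programme, not as a proof, which is consistent with the statement's status in the paper.
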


Let $h(G)$ denote the Hadwiger number of $G$ (the order of a largest clique minor in $G$).
\begin{conjecture}
Every graph $G$ satisfies $\mu(G)\leqslant h(G)$.
\end{conjecture}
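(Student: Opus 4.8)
The plan is to prove the statement in its sharpest, structural form: a winning play with $t$ colors on $G$ should \emph{certify} a $K_t$ minor, so that $\mu(G)\geqslant t$ forces $h(G)\geqslant t$. Since $\mu(K_n)=n=h(K_n)$ by Theorem \ref{th_clique}, the inequality is an equality on cliques; the bound is therefore tight, and no purely probabilistic estimate can deliver it. The Local Lemma bound $e(\Delta+1)$ and the chromatic upper bound of Theorem \ref{thm:ub} (of order $n$) are far too weak, and the only available link to the chromatic number, the conjectural $\chi(G)\leqslant\mu(G)+1$, runs the wrong way. I would instead try to read a clique minor directly off the algebra of the strategies $F_i$.

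First I would assemble the monotonicity ingredients that a minor argument needs. Subgraph monotonicity $\mu(H)\leqslant\mu(G)$ for $H\subseteq G$ is already in hand. The decisive missing ingredient is \emph{contraction}: given a high-$\mu$ graph one wants to locate $t$ vertex-disjoint, connected, pairwise-communicating ``committees'' to serve as the branch sets of a $K_t$ minor. The natural engine is the recursive structure of optimal strategies exhibited in Theorem \ref{th_partition} and Theorem \ref{th_Skn}, where a winning play on $G$ is glued together from winning plays on a sparse part $A$ and a denser part $B$. I would induct on $|V(G)|$, peeling a vertex of small back-degree and using that $K_{t+1}$-minor-free graphs are $O(t\sqrt{\log t})$-degenerate (Kostochka--Thomason) to keep the recursion finite; at each step one would either produce a demonic coloring outright or contract a communicating neighborhood into a fresh branch set, enlarging the minor.

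The hard part will be exactly this conversion step. A winning strategy is only a \emph{global} certificate that some Bear always guesses correctly, and there is no evident \emph{local} witness --- $t$ disjoint connected pairwise-adjacent vertex sets --- to extract from it. The sharpest test the argument must survive is the split-graph family $S_{k,n}$ of Theorem \ref{th_Skn}: here $\col(S_{k,n})=k+1$ is tiny, yet $\mu(S_{k,n})\geqslant 2^k$, so the branch sets certifying a large clique minor, if they are to be found at all, cannot arise from contracting small communicating neighborhoods in the obvious way. I expect this branch-set-extraction step, and in particular forcing it to produce enough branch sets on sparse graphs such as $S_{k,n}$, to be the main obstacle on which the whole approach turns.
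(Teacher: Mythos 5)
This statement is one of the paper's concluding conjectures: the paper offers no proof of it, so there is nothing of the authors' to compare your argument against. Your submission is in any case a research programme rather than a proof --- you yourself concede that the central step, extracting $t$ disjoint connected pairwise-adjacent branch sets from a winning strategy with $t$ colors, is missing --- and that alone would make it incomplete.

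The deeper problem is that the programme cannot be completed, because the inequality $\mu(G)\leqslant h(G)$ is refuted by the paper's own Theorem~\ref{th_Skn}, on exactly the family you single out as ``the sharpest test''. Compute $h(S_{k,n})$: each vertex $w\in V_2$ has $N(w)=V_1$ and $V_2$ is independent, so in any clique-minor model a branch set avoiding $V_1$ must be a single vertex of $V_2$, and two such singletons are nonadjacent; hence at most one branch set avoids $V_1$ and at most $k$ meet it, giving $h(S_{k,n})\leqslant k+1$, with equality since $V_1$ together with one vertex of $V_2$ induces $K_{k+1}$. But Theorem~\ref{th_Skn} gives $\mu(S_{k,n})\geqslant 2^k$ for $n$ large enough, and $2^k>k+1$ for every $k\geqslant 2$; already the book graph $S_{2,n}$ satisfies $\mu(S_{2,n})\geqslant 4>3=h(S_{2,n})$. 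So $S_{k,n}$ is not an obstacle your branch-set extraction must survive; it is a counterexample showing that no such extraction can exist, and you stopped one computation short of noticing it --- which also confirms your own intuition that a winning strategy is a global certificate with no local witness. What could survive in this direction is at most a bound of the form $\mu(G)\leqslant F(h(G))$ for some function $F$, necessarily with $F(k)\geqslant 2^{k-1}$, exactly parallel to the paper's first conjecture $\mu(G)\leqslant F(\col(G))$, since $\col(S_{k,n})=h(S_{k,n})=k+1$.
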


\end{document}